 \newtheorem{thm}{Theorem}[section]
 \newtheorem{cor}[thm]{Corollary}
 \newtheorem{lem}[thm]{Lemma}
 \newtheorem{prop}[thm]{Proposition}
 \theoremstyle{definition}
 \newtheorem{defn}[thm]{Definition}
 \theoremstyle{remark}
 \newtheorem{rem}[thm]{Remark}
 \theoremstyle{remark}
 \numberwithin{equation}{section}
 \newcommand{\St}{\mathrm{St}}
 \newcommand{\re}{\mathrm{Re}}
 \newcommand{\id}{\mathrm{id}}
 \newcommand{\an}{\mathrm{an}}
 \newcommand{\Spec}{\mathrm{Spec}}
 \newcommand{\Frob}{\mathrm{Frob}}
 \newcommand{\Res}{\mathrm{Res}}
 \newcommand{\Vol}{\mathrm{Vol}}
 \newcommand{\Gal}{\mathrm{Gal}}
 \newcommand{\GL}{\mathrm{GL}}
 \newcommand{\PGL}{\mathrm{PGL}}
 \newcommand{\pr}{\mathrm{pr}}
 \newcommand{\Tr}{\mathrm{Tr}}
 \newcommand{\et}{\mathrm{et}}
\newcommand{\inv}{\mathrm{inv}}
 \renewcommand{\mod}{\mathrm{mod}}
 \newcommand{\fp}{\mathfrak p}
 \newcommand{\fP}{\mathfrak P}
 \newcommand{\cO}{\mathcal{O}}
 \renewcommand{\cH}{\mathcal{H}}
 \newcommand{\cE}{\mathcal{E}}
 \newcommand{\cK}{\mathcal{K}}
 \newcommand{\cA}{\mathcal{A}}
 \newcommand{\cG}{\mathcal{G}}
 \newcommand{\cI}{\mathcal{I}}
 \newcommand{\cX}{\mathcal{X}}
 \newcommand{\cF}{\mathcal{F}}
 \renewcommand{\cD}{\mathcal{D}}
 \newcommand{\C}{\mathbb{C}}
 \newcommand{\F}{\mathbb{F}}
 \newcommand{\Q}{\mathbb{Q}}
 \newcommand{\Z}{\mathbb{Z}}
 \newcommand{\A}{\mathbb{A}}
 \newcommand{\p}{\mathbb{P}}
 \newcommand{\T}{\mathbb{T}}
 \newcommand{\M}{\mathbb{M}}
 \newcommand{\KI}{K_{\cD,I}}
 \newcommand{\bKI}{\bar{K}_{\cD,I}}
 \newcommand{\sKI}{{K}_{\bar{\mathcal{D}},I}}
 \newcommand{\bG}{\bar{G}}
 \newcommand{\Nr}{\mathrm{Nr}}
 \newcommand{\mv}{M^\cD_{I,o}}
 \newcommand{\mvq}{\bar{M}^\cD_{I,o}}
 \newcommand{\chp}{\mathrm{ch.p.}}
 \newcommand{\disc}{\mathrm{disc}}
 \newcommand{\wF}{\widetilde{F}}
 \newcommand{\wP}{\widetilde{\Pi}}
 \newcommand{\winf}{\widetilde{\infty}}
 \newcommand{\wo}{\widetilde{o}}
 \newcommand{\wx}{\widetilde{x}}
 \newcommand{\Ell}{\mathbf{Ell}}
 \newcommand{\G}{\Gamma}
 \newcommand{\To}{\longrightarrow}
 \newcommand{\bs}{\setminus}
 \newcommand{\Fi}{F_\infty}
 \newcommand{\bD}{\bar{D}}
 \newcommand{\bcD}{\bar{\mathcal{D}}}
 \newcommand{\norm}[1]{\left\Vert#1\right\Vert}
\begin{document}

\title[Modular varieties
and the Weil-Deligne bound]{Modular varieties of $\cD$-elliptic
sheaves\\ and the Weil-Deligne bound}

\author{Mihran Papikian}

\address{Department of Mathematics, Pennsylvania State University,
University Park, PA 16802}


\email{papikian@math.psu.edu}

\subjclass{Primary 14G05, 11G25; Secondary 11G09, 14G15}
\keywords{Modular varieties, Weil-Deligne bound, varieties over
finite fields}

\date{}


\begin{abstract} We compare the asymptotic grows of the number of
rational points on modular varieties of $\cD$-elliptic sheaves over
finite fields to the grows of their Betti numbers as the degree of
the level tends to infinity. This is a generalization to higher
dimensions of a well-known result for modular curves. As a
consequence of the main result, we also produce a new asymptotically
optimal sequence of curves.
\end{abstract}

\maketitle

\section{Introduction}

\subsection{Motivation} Let $q$ be a power of a prime $p$ and let $\F_q$ denote
the finite field with $q$ elements. Let $X$ be a smooth, projective,
geometrically irreducible, $d$-dimensional variety over $\F_q$. Fix
an algebraic closure $\overline{\F}_q$ of $\F_q$. Also, fix a prime
number $\ell\neq p$ and an algebraic closure $\overline{\Q}_\ell$ of
the field $\Q_\ell$ of $\ell$-adic numbers. Grothendieck's theory of
\'etale cohomology produces the $\ell$-adic cohomology groups
$$
H^i(X):=H^i(X\otimes_{\F_q} \overline{\F}_q, \overline{\Q}_\ell),
\quad i\geq 0.
$$
These groups are finite dimensional $\overline{\Q}_\ell$-vector
spaces endowed with an action of the Galois group
$\Gal(\overline{\F}_q/\F_q)$. The \textit{$\ell$-adic Betti numbers}
of $X$ are the dimensions
$$
h^i(X):=\dim_{\overline{\Q}_\ell}H^i(X).
$$
It is known that $h^i(X)=0$ for $i>2d$, $h^0(X)=1$, and
$h^{2d-i}(X)=h^i(X)$.

Let $\Frob_q$ be the inverse of the standard topological generator
$x\mapsto x^q$ of $\Gal(\overline{\F}_q/\F_q)$, i.e., the so-called
\textit{geometric Frobenius element}. Assume $H^i(X)\neq 0$. Denote
the eigenvalues of $\Frob_q$ acting on $H^i(X)$ by $\alpha_{i,1},
\alpha_{i,2},\dots, \alpha_{i,s}$ (here $s=h^i(X)$). Deligne proved
that $\{\alpha_{i,j}\}$ are algebraic numbers. Moreover, for any
isomorphism $\iota: \overline{\Q}_\ell\to \C$ the absolute value
$|\iota (\alpha_{i,j})|$ is independent of $\iota$ and is equal to
$q^{i/2}$ (Riemann hypothesis for $X$); see \cite{WeilI}.

For an integer $n\geq 1$ denote by $\F_{q^n}$ the degree $n$
extension of $\F_q$, and let $X(\F_{q^n})$ be the set of
$\F_{q^n}$-rational points on $X$. By the Grothendieck-Lefschetz
trace formula
$$
\# X(\F_{q^n})=\sum_{i=0}^{2d} (-1)^i \Tr(\Frob_q^n\ |\
H^i(X))=\sum_{i=0}^{2d}(-1)^i\sum_{j=1}^{h^i(X)}\alpha_{i,j}^n.
$$
If one combines this with Deligne's result, then there results the
\textit{Weil-Deligne bound}
\begin{equation}\label{eq-WD}
\# X(\F_{q^n}) \leq \sum_{i=0}^{2d}
q^{in/2}h^i(X)=:\mathrm{WD}_n(X).
\end{equation}

It is natural to ask how ``optimal'' is the bound (\ref{eq-WD}).
More precisely, suppose we fix some natural numbers $b_1,
b_2,\cdots, b_d$. \textit{How close can $\# X(\F_{q^n})$ get to
$\mathrm{WD}_n(X)$ for a variety $X$ with $h^i(X)=b_i$, $1\leq i\leq
d$?} Although this question has received a considerable amount of
attention, it still remains largely open, cf. \cite{SerreRP},
\cite{Tsfasman}.

Let $h(X):=\sum_{i=0}^{2d}h^i(X)$. In this paper we will be mostly
concerned with the asymptotic optimality of (\ref{eq-WD}):
\textit{How close can the ratio $\# X(\F_{q^n})/h(X)$ get to
$\mathrm{WD}_n(X)/h(X)$ as $h(X)\to \infty$, assuming $d$, $q$ and
$n$ are fixed?} Tsfasman raised questions of this nature in
\cite{Tsfasman}. Besides its intrinsic mathematical interest, the
motivation for this problem partly comes from coding theory via
Goppa's \cite{Goppa} algebro-geometric construction of
error-correcting codes over $\F_{q^n}$.

\vspace{0.1in}

For curves (i.e., when $d=1$) we clearly have
$\mathrm{WD}_n(X)/h(X)\to q^{n/2}$ as $h(X)\to \infty$. On the other
hand, surprisingly enough, it turns out that
\begin{equation}\label{eq-DV}
\underset{X}{\lim\mathrm{sup}}\left( \frac{\#
X(\F_{q^n})}{h(X)}\right)\leq \frac{q^{n/2}-1}{2}.
\end{equation}
This is a well-known result of Drinfeld and Vladut \cite{DV}. In
particular, curves of genus sufficiently larger than $q^n$ never
have as many rational points as the Weil-Deligne bound allows.

A sequence of curves $\{X_i\}$ over $\F_{q^n}$ is called
\textit{asymptotically optimal} if $h(X_i)\to \infty$ and $\#
X_i(\F_{q^n})/h(X_i)\to (q^{n/2}-1)/2$. It is not known whether
asymptotically optimal sequences exist when $q^n$ is not a square
(even for a singe $q^n$), in other words, it is an open problem
whether in general (\ref{eq-DV}) is the best possible upper-bound on
$\lim\mathrm{sup}(\# X(\F_{q^n})/h(X))$. On the other hand, when
$q^n$ is a square, then asymptotically optimal sequences always
exist. Here Shimura curves (and their function field analogues -
Drinfeld modular curves) play a key role: modular curves with
appropriate level structures over quadratic extensions of finite
fields attain the bound (\ref{eq-DV}) as the size of the level tends
to infinity. This is due to Drinfeld, Ihara, Manin, Tsfasman, Vladut
and Zink; see \cite{Ihara}, \cite{MV}, \cite{TVZ}, \cite{DV}.  In
fact, every known asymptotically optimal sequence of curves over
$\F_{q^2}$ has the property that every $X_i$ is a classical, Shimura
or Drinfeld modular curve for $i$ sufficiently large, cf.
\cite{Elkies}, \cite{many-authors}.

\vspace{0.1in}

For the higher dimensional varieties there are only a few partial
results. Lachaud and Tsfasman, using explicit formulae, proved a
certain analogue of the Drinfeld-Vladut bound (\ref{eq-DV}); see
\cite{LT}, \cite{Tsfasman}. As far as I am aware, there were no
known examples of sequences of $d$-dimensional varieties $\{X_i\}$
such that $h(X_i)\to \infty$ and for which $\# X_i(\F_{q})/h(X_i)$
converges to a number close to the asymptotic Weil-Deligne bound
(assuming $q$ is fixed), besides the following obvious construction.
Take each $X_i$ to be an appropriate product of lower dimensional
varieties. The number of rational points and the Betti numbers are
easy to compute inductively. Indeed, if $X=Y\times Z$ then
$\#X(\F_q)=(\#Y(\F_q))\cdot (\#Z(\F_q))$ and $h(X)=h(Y)\cdot h(Z)$.
(The first identity is clear and the second follows from K\"unneth
formula.) Using this technique, one can produce all sorts of
interesting limits (but they all are smaller than the asymptotic
Weil-Deligne bounds) cf. \cite[$\S$5]{Tsfasman}.

The main result of this paper is a generalization of
Tsfasman-Vladut-Zink result for Shimura curves to the case of
certain higher dimensional modular varieties. The question of
extending the results in \cite{TVZ} to other modular varieties
already appears in that paper (Question C on p.22).

\begin{rem}
One might ask whether the Weil-Deligne bound is ever asymptotically
optimal. More precisely, suppose $d$ and $q$ are fixed. Does there
exist a sequence $\{X_i\}$ of $d$-dimensional varieties over $\F_q$
such that $h(X_i)\to \infty$ and $\#
X_i(\F_{q})/\mathrm{WD}_1(X_i)\to 1$? I expect that the answer is
always negative. When $d=1$ this of course follows from
(\ref{eq-DV}).
\end{rem}

\subsection{Main result}
Let $C:=\p^1_{\F_q}$ be the projective line over $\F_q$. Denote by
$F=\F_q(T)$ the field of rational functions on $C$. Fix some $d\geq
2$ and let $D$ be a $d^2$-dimensional central division algebra over
$F$, which is split at $\infty=1/T$, i.e., $D\otimes_F F_\infty$ is
isomorphic to the algebra $\M_d(F_\infty)$ of $d\times d$ matrices
with entries in $F_\infty$. Fix a locally-free sheaf $\cD$ of
$\cO_C$-algebras on $C$ whose generic fibre is $D$ and such that for
every place $x\in |C|$, $\cD_x:=\cD\otimes_{\cO_C} \cO_{x}$ is a
maximal order of $D_x=D\otimes_F F_x$, where $\cO_x$ and $F_x$ are
the completions of $\cO_{C,x}$ and $F$ at $x$, respectively. Denote
by $R\subset |C|$ the set of places where $D$ ramifies; hence for
all $x\not\in R$ the couple $(D_x,\cD_x)$ is isomorphic to
$(\M_d(F_x),\M_d(\cO_x))$. \textit{In this paper we make a blanket
assumption that $D_x$ is a division algebra for every $x\in R$}. Let
$o$ be a fixed closed point on $C-R-\{\infty\}$. Denote the residue
field at $o$ by $\F_o$, its degree $m$ extension by $\F_o^{(m)}$,
and $q_o:=q^{[\F_o:\F_q]}=\# \F_o$.

Call a prime ideal $\fp$ of the polynomial ring $A:=\F_q[T]$
\textit{admissible} if $x\mapsto x^d$ is an automorphism of
$(A/\fp)^\times/\F_q^\times$. There are infinitely many admissible
primes; see \cite[Lem. 4.6]{PapIMRN}. A closed subscheme $I$ of $C$
is called \textit{admissible} if $I\cap (R\cup \infty\cup
o)=\emptyset$ and $I=\Spec(A/\fp)$ for an admissible prime $\fp\lhd
A$.

Assume $I=\Spec(A/\fp)$ is admissible and let $\mv$ be the modular
variety of isomorphism classes of $\cD$-elliptic sheaves over
$\overline{\F}_o$ with pole at $\infty$ equipped with level-$I$
structures; see $\S$\ref{sec2} for definitions. There is a natural
action of $(A/\fp)^\times$ on $\mv$ via its (diagonal) action on
level structures. We denote the quotient variety by $\mvq$. The main
result of this paper is the following:

\begin{thm}\label{thm-main} $\mvq$ is a smooth, projective, geometrically irreducible,
$(d-1)$-dimensional variety defined over $\F_o$. Moreover
\begin{equation}\label{eq-my}
\lim_{\deg(I)\to
\infty}\frac{\#\mvq(\F_o^{(d)})}{h(\mvq)}=\frac{1}{d}\prod_{i=1}^{d-1}
(q_o^i-1).
\end{equation}
\end{thm}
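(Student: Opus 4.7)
The plan is to follow the Tsfasman-Vladut-Zink strategy for Shimura and Drinfeld modular curves, now in the higher-dimensional setting of $\cD$-elliptic sheaves, combining the geometric input of Laumon-Rapoport-Stuhler with the Grothendieck-Lefschetz trace formula and the automorphic description of the $\ell$-adic cohomology.

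For the geometric assertions, I would appeal to Laumon-Rapoport-Stuhler: $\mv$ is constructed as a smooth quasi-projective scheme of relative dimension $d-1$ over $\Spec \cO_o$, and it is projective over $\F_o$ precisely because $D_x$ is a division algebra for every $x \in R$ (our blanket hypothesis eliminates boundary strata in the compactification). Geometric irreducibility of $\mv$ reduces, via the reduced-norm morphism, to a finite idele-class-group computation together with strong approximation for $\SL_d$, and the passage to the $(A/\fp)^\times$-quotient identifies the components into a single geometric piece. The admissibility of $I$, namely that $x \mapsto x^d$ is an automorphism of $(A/\fp)^\times / \F_q^\times$, is exactly the condition forcing the $(A/\fp)^\times$-action on $I$-level structures to be free: a nontrivial stabilizer would produce a $d$-th root of unity in $(A/\fp)^\times/\F_q^\times$, contradicting admissibility. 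Hence $\mvq$ inherits smoothness, projectivity, and geometric irreducibility, and has dimension $d-1$.

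For the point count, I apply the Grothendieck-Lefschetz trace formula
$$\#\mvq(\F_o^{(d)}) = \sum_{i=0}^{2(d-1)} (-1)^i \Tr\bigl(\Frob_o^d \mid H^i(\mvq)\bigr).$$
Laumon-Rapoport-Stuhler furnish a $\Gal(\bar{\F}_o/\F_o)$-equivariant decomposition
$$H^{d-1}(\mvq \otimes \bar{\F}_o) \;\cong\; \bigoplus_\pi m(\pi, I) \cdot \sigma_\ell(\pi) \;\oplus\; H^{d-1}_{\Eis},$$
where $\pi$ ranges over cuspidal automorphic representations of $D^\times(\A_F)$ with the local behavior at $\infty$ imposed by the moduli problem, $m(\pi,I) := \dim (\pi^\infty)^{\KI \cap (A/\fp)^\times}$, $\sigma_\ell(\pi)$ is a $d$-dimensional $\ell$-adic Galois representation, and the Eisenstein piece $H^{d-1}_{\Eis}$ as well as the cohomology $H^i$ for $i \neq d-1$ contribute only polynomials in $q_o$ with coefficients bounded independently of $I$. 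Since $o \notin R \cup \{\infty\}$, each $\pi_o$ is spherical, and the Frobenius eigenvalues $\alpha_{\pi,1},\dots,\alpha_{\pi,d}$ of $\sigma_\ell(\pi)$ at $o$ coincide (up to a half-integer Tate twist) with the Satake parameters of the Jacquet-Langlands transfer $\pi_o^{\JL}$; Lafforgue's theorem forces these to be pure of weight $d-1$, so $|\alpha_{\pi,j}| = q_o^{(d-1)/2}$.

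To extract the limit, one compares the asymptotics of the two sides. Since each $\sigma_\ell(\pi)$ is $d$-dimensional while the $H^i$ with $i \neq d-1$ and the Eisenstein piece contribute only lower-order terms, one obtains $h(\mvq) \sim d \cdot \sum_\pi m(\pi, I)$ and
$$\#\mvq(\F_o^{(d)}) = (-1)^{d-1} \sum_\pi m(\pi, I) \sum_{j=1}^d \alpha_{\pi,j}^d \;+\; O(1)$$
as $\deg(I) \to \infty$. The leading behavior of $\sum_\pi m(\pi,I)\sum_j \alpha_{\pi,j}^d$ is controlled by the simple trace formula on $D^\times$ (available in our setting since $D$ is division at every $x \in R$, in the style of Deligne-Kazhdan), combined with a Satake-side computation which identifies the dominant power sum with $(-1)^{d-1}\prod_{i=1}^{d-1}(q_o^i-1)$ per automorphic form. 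Dividing by the $d$ from $h(\mvq)$ then produces the claimed limit. The main obstacle will be this trace-formula / equidistribution step: pinning down the leading coefficient $\prod_{i=1}^{d-1}(q_o^i-1)$ precisely requires carefully matching the Plancherel measure on the spherical unitary dual of $\GL_d(F_o)$ against the counting function for level-$I$ forms, and one must also verify that the Eisenstein and lower-degree cohomology are genuinely of strictly lower order in the ratio.
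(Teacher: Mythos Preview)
Your strategy differs substantially from the paper's, and the step you flag as ``the main obstacle'' is exactly where the paper takes a different road.

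For the point count the paper does \emph{not} use Lefschetz plus equidistribution of Satake parameters. It works with the modular interpretation of $\mvq(k)$ directly: points are grouped by isogeny class, indexed by $(D,\infty,o)$-types $(\wF,\wP)$. The paper shows that every point in the supersingular class $(F,\Pi)$ is $\F_o^{(d)}$-rational (a central element of $\Delta^\times$ witnesses the Frobenius condition), and that for $\deg(\fp)$ large no non-supersingular point can be $\F_o^{(d)}$-rational (such a point would force $\fp$ to divide a discriminant whose degree is bounded in terms of $d$ and $\deg(o)$ only). The supersingular locus is then counted by a mass formula, giving $\#\mvq(\F_o^{(d)})=\#\PGL_d(\F_\fp)\cdot\prod_{i=1}^{d-1}\zeta_F^{R\cup\{\infty,o\}}(-i)$ as a Tamagawa-type volume for the inner form $\bD$. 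For $h(\mvq)$ the paper likewise avoids the trace formula: it passes to the $\infty$-fibre by proper base change, invokes the rigid-analytic uniformization $(M^\cD_{I,\eta}\otimes\Fi)^\an\cong\bigsqcup_s \G_{I,s}\backslash\Omega^d$, and then Schneider--Stuhler together with Garland--Casselman vanishing and Serre's Euler--Poincar\'e measure give $\bar h_I\sim (-1)^{d-1}d\cdot\#\PGL_d(\F_\fp)\prod_{i=1}^{d-1}\zeta_F^{R\cup\infty}(-i)$. Dividing, the global factors cancel and only the local zeta factor at $o$ survives, producing $\frac{1}{d}\prod_{i=1}^{d-1}(q_o^i-1)$.

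Your route instead needs the weighted average of $(-1)^{d-1}\sum_j\alpha_{\pi,j}^d$ over level-$I$ forms to converge to $\prod_{i=1}^{d-1}(q_o^i-1)$. Pursued via the simple trace formula, the relevant Hecke function at $o$ is supported away from the identity, so the dominant geometric-side term is not the identity orbital integral but the central class with $v_o=1$; evaluating that orbital integral is essentially the supersingular mass formula again. So your approach is viable in principle but circuitous, and the paper (which in \S1.3 explicitly contrasts its method with Sauvageot's trace-formula approach) chose the direct route precisely to bypass this. A smaller correction on the first part: admissibility is not about freeness of the $\F_\fp^\times$-action on points (indeed $\F_q^\times$ acts trivially). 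The center acts on the set of geometric components of $\mv$ through the $d$-th power map on $\F_\fp^\times/\F_q^\times$; admissibility makes this bijective, so the action on components is simultaneously transitive (giving geometric irreducibility of $\mvq$) and free (making $\mv\to\mvq$ \'etale, hence $\mvq$ smooth).
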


A similar result for Drinfeld modular varieties is proven in
\cite{PapIMRN}.

It will be clear from the proof of Theorem \ref{thm-main} that
$h(\mvq)\to \infty$ as $\deg(I)\to \infty$. If we specialize the
theorem to $d=2$ and $\deg(o)=1$, then we get a sequence of curves
over $\F_{q^2}$, indexed by the primes in $A$ of odd degree, which
is asymptotically optimal. This last fact is new and is of
independent interest due to a rather limited number of examples of
asymptotically optimal sequences of curves.

From one of the main results in \cite{LRS} (see Theorem
\ref{thm6.1}) or, alternatively, from the discussion in
$\S$\ref{ss6.2}, it follows that
\begin{equation}\label{LRS-limit}
\lim_{\deg(I)\to
\infty}\frac{\mathrm{WD}_d(\mvq)}{h(\mvq)}=q_o^{d(d-1)/2}.
\end{equation}

One can compare the limits (\ref{eq-my}) and (\ref{LRS-limit}) from
two opposite viewpoints. On the one hand, since
$\frac{1}{d}\prod_{i=1}^{d-1}(q_o^{i}-1) < q_o^{d(d-1)/2}$, modular
varieties $\mvq$ never have as many $\F_o^{(d)}$-rational points as
the Weil-Deligne bound allows when $\deg(I)$ is large. On the other
hand, the degree of $\frac{1}{d}\prod_{i=1}^{d-1}(q_o^{i}-1)$ as a
polynomial in $q_o$ is the same as the degree of $q_o^{d(d-1)/2}$,
so $\#\mvq(\F_o^{(d)})$ asymptotically comes close to
$\mathrm{WD}_d(\mvq)$, especially when $q_o$ is large compared to
$d$, and one can say that the varieties $\mvq$ have many
$\F_o^{(d)}$-rational points compared to their Betti numbers.

\subsection{Shimura varieties}
Modular varieties of $\cD$-elliptic sheaves are the function field
analogues of Shimura varieties over number fields. The anonymous
referee pointed out that in a recent paper \cite{Sauvageot} Fran\c
cois Sauvageot considered a problem about asymptotic properties of
Shimura varieties which is similar to the problem we address in this
paper. In this subsection we would like to compare Sauvageot's
result to ours.

Let $G$ be an algebraic connected reductive group over a global
field of characteristic $0$. Sauvageot introduces a notion of
\textit{strongly vanishing family} $\cK$ of compact subgroups of $G$
over the finite adeles. Attached to $G$ and $K\in \cK$ there is a
Shimura variety $X_K$. Generalizing an approach of Serre
\cite{SerreJAMS} by means of Arthur's trace formula, Sauvageot gives
a simple expression for $\underset{K\in \cK}{\lim}\Vol(K)\chi(X_K)$
in terms of an invariant of $G$ and $\cK$; here $\chi(X_K)$ is the
Euler-Poincar\'e characteristic of the $L^2$-cohomology of $X_K$ and
$\Vol(K)$ is the volume of $K$ with respect to an appropriately
normalized Haar measure. On the other hand, for a prime of good
reduction $o$ of $X_K$, assuming a conjecture of Milne, a formula of
Kottwitz expresses $\underset{K\in \cK}{\lim}\Vol(K)(\#
X_{K,o}(\F_o^{(m)}))$, $m\geq 1$, in terms of an invariant of $G$
and a sum of twisted orbital integrals. This again uses trace
formula techniques. Overall, one obtains a (conjectural) formula for
$\underset{K\in \cK}{\lim}\# X_{K,o}(\F_o^{(m)})/\chi(X_K)$ which
involves a sum of twisted orbital integrals. Then Sauvageot gives
some criteria for detecting strongly vanishing families $\cK$ (e.g.,
for $\GL_2$ over $\Q$ the classical congruence subgroups form such
families as the level tends to infinity). Some relevant questions
are not discussed in \cite{Sauvageot}: First, whether it is possible
to compute $\underset{K\in \cK}{\lim}\#
X_{K,o}(\F_o^{(m)})/\chi(X_K)$ explicitly in some situations other
than the case of curves, i.e., to give a simple expression for the
sum of twisted orbital integrals which comes from Kottwitz's
formula. Second, what is the relationship of $\chi(X_K)$ to the
invariants of $X_{K,o}$, such as the $\ell$-adic Euler-Poincar\'e
characteristic of $X_{K,o}$, especially when $X_K$ is not compact.
Third, for which $K$ the resulting varieties $X_{K,o}$ are smooth
and geometrically irreducible.

Now from the point of view of general Shimura varieties the
situation which we consider in this paper is rather special since
the only $G$ which is allowed is the multiplicative group of a
division algebra. Our notion of admissible level seems to be
analogous to Sauvageot's notion of vanishing family. The main
advantage of our result is that the asymptotic formula which we
obtain is very explicit. Our strategy of the proof is also
different. In computing the asymptotic number of rational points
over finite fields we crucially rely on the modular interpretation
of the points on $M^\cD_{I,o}$. In computing the asymptotic Betti
numbers we use special properties of the cohomology of varieties
having non-archimedean uniformization, along with some classical
results about discrete subgroups of $p$-adic groups. Note that
estimating the sum of Betti numbers $h(\mvq)$ or their alternating
sum $\chi(\mvq)$ are equivalent problems for $\mvq$ since the middle
cohomology group grows much faster than all the others (although in
general $h$ is certainly a better measure of the combined size of
the Betti numbers than $\chi$). It is reasonable to expect that the
methods of this paper can be adapted to some PEL Shimura varieties.

\subsection{Outline of the proof} The proof of Theorem
\ref{thm-main} consists of three parts, which are more-or-less
independent of each other.

In Section \ref{sec2} we recall the definition of moduli schemes of
$\cD$-elliptic sheaves and discuss the Stein factorization of these
schemes. We prove that $\mvq$ is smooth and geometrically
irreducible when $I$ is admissible.

In Section \ref{sec3} we discuss $\cD$-elliptic sheaves over
$\overline{\F}_o$ and the rationality of the corresponding points on
the moduli varieties. The main tool used in this section is the
description of the set of isomorphism classes of $\cD$-elliptic
sheaves in a given isogeny class \cite[\S\S9-10]{LRS}. We consider a
special class of $\cD$-elliptic sheaves, called supersingular
$\cD$-elliptic sheaves, and show that the corresponding points on
$\mvq$ are all $\F_o^{(d)}$-rational. Moreover, we prove that when
$\deg(I)$ is large enough, compared to $d$ and $\deg(o)$, the
supersingular points are the only $\F_o^{(d)}$-rational points on
$\mvq$ (when $d=2$ this can be replaced by the Drinfeld-Vladut
bound). We should mention that Lachaud-Tsfasman bound is essentially
equivalent to the Weil-Deligne bound when applied to $\mvq$, hence
is much larger than the limit (\ref{eq-my}). The number of
supersingular points can be computed as a volume. Overall, we get an
asymptotic formula for $\#\mvq(\F_o^{(d)})$.

In Section \ref{sec4} we estimate how $h(\mvq)$ grows as $\deg(I)$
tends to infinity. This relies on several deep results. First, using
proper base change, we transfer the problem from $o$-fibre to
$\infty$-fibre. Then a result of Schneider and Stuhler about
cohomology of varieties having rigid-analytic uniformization (in
combination with Berkovich's theory) reduces the problem to a
calculation of the dimension of a certain space of cusp forms on
$D^\times$. Next, a theorem of Casselman and Garland reduces the
calculation of this dimension to a calculation of the
Euler-Poincar\'e characteristic of a certain discrete cocompact
subgroup of $\PGL_d(\Fi)$. Finally, Serre's theory of
Euler-Poincar\'e measures allows us to compute the Euler-Poincar\'e
characteristic as a volume.



\section{Notation}

Aside from the notation in Introduction, we will use the following:

\subsection{} The residue field of $\cO_x$ is
denoted by $\F_x$, the cardinality of $\F_x$ is denoted by $q_x$. We
assume that the valuation $v_x:F_x\to\Z$ is normalized by
$v_x(\varpi_x)=1$, where $\varpi_x$ is a uniformizer of $\cO_x$; the
norm $|\cdot|_x$ on $F_x$ is $q_x^{-v_x(\cdot)}$. We denote the
adele ring of $F$ by $\A:=\prod'_{x\in |C|} F_x$. For a set of
places $S$ of $F$ we denote by $\A^S:=\prod'_{x\not\in S}F_x$ the
adele ring outside $S$, and $\cD^S:=\prod_{x\not\in S}\cD_x$.

\subsection{}\label{ss2.2} Let $I\neq \emptyset$ be a closed subscheme of $C$, and
let $\cI$ be the ideal sheaf of $I$. Denote $\cO_I=\cO_C/\cI$,
$\deg(I)=\dim_{\F_q}(\cO_I)$, and $\cD_I=\cD\otimes_{\cO_C}\cO_I$.
For $I$ is disjoint from $S$, let
$$
K_{\cD,I}^S:=\ker((\cD^S)^\times\to \cD_I^\times).
$$

\subsection{} Let $\zeta_F(s)=\prod_{x\in |C|}\zeta_x(s)$ be the zeta
function of $C$; here $\zeta_x(s)=(1-q_x^{-s})^{-1}$. It is
well-known (and is easy to prove) that
$$
\zeta_F(s)=\frac{1}{(1-q^{-s})(1-q^{1-s})},
$$
Let $\zeta_F^S(s)=\prod_{x\not\in S}\zeta_x(s)$ be the partial zeta
function with respect to $S$.

\subsection{} For a scheme $W$ over $\F_q$ denote by $\Frob_W$ its
Frobenius endomorphism, which is the identity on the points and the
$q$-th power map on the functions. Denote by $C\times W$ the fibred
product $C\times_{\Spec(\F_q)}W$. For a sheaf $\cF$ on $C$ and a
sheaf $\cG$ on $W$, the sheaf $\pr_1^\ast(\cF)\otimes
\pr_2^\ast(\cG)$ is denoted by $\cF\boxtimes \cG$.

\subsection{} Let $G$ be the
algebraic group over $F$ defined by $G(B)=(D\otimes_F B)^\times$ for
any $F$-algebra $B$; this is the multiplicative group of $D$.


\section{Moduli schemes of $\cD$-elliptic sheaves}\label{sec2}

Let $S$ be a $\F_q$-scheme. A $\cD$-\textit{elliptic sheaf} over $S$
consists of a commutative diagram
\begin{equation}\label{eq-es}
\xymatrix{ \cdots \ar@{^{(}->}[r]^-{j_{i-2}} & \cE_{i-1}
\ar@{^{(}->}[r]^-{j_{i-1}} &
\cE_{i} \ar@{^{(}->}[r]^-{j_i} & \cE_{i+1} \ar@{^{(}->}[r]^-{j_{i+1}} & \cdots\\
\cdots \ar[ur]^-{t_{i-2}}\ar@{^{(}->}[r]^-{^\tau\!j_{i-2}} &
{^\tau}\!\cE_{i-1}
\ar[ur]^-{t_{i-1}}\ar@{^{(}->}[r]^-{^\tau\!j_{i-1}} &
{^\tau}\!\cE_{i} \ar[ur]^-{t_i}\ar@{^{(}->}[r]^-{^\tau\!j_i} &
{^\tau}\!\cE_{i+1}
\ar[ur]^-{t_{i+1}}\ar@{^{(}->}[r]^-{^\tau\!j_{i+1}} & \cdots}
\end{equation}
where each $\cE_i$ is a locally free $\cO_{C\times S}$-module of
rank $d^2$ equipped with a right action of $\cD$ compatible with the
$\cO_C$-action,
$$
^\tau\!\cE_i=(\id_C\times \Frob_{S})^\ast \cE_i,
$$
and all $j$'s and $t$'s are $\cO_{C\times S}$-linear injections
compatible with the action of $\cD$. All that is given is subject to
the following conditions:
\begin{enumerate}
\item[(i)] Periodicity:
$$
\cE_{i+d}=\cE_i\otimes_{\cO_{C\times S}} (\cO_C(\infty)\boxtimes
\cO_S).
$$
Here $\cE_i$ is considered as a submodule of $\cE_{i+d}$ under the
$d$-fold composition of $j$, and $\cO_C(\infty)$ is an
$\cO_C$-module via the natural injection $\cO_{C}\hookrightarrow
\cO_{C}(\infty)$.
\item[(ii)] Pole: $\cE_i/j_{i-1}(\cE_{i-1})$ is isomorphic to the direct
image $(\pi_\infty)_\ast\cG_i$ of a locally free rank-$d$
$\cO_S$-module $\cG_i$ by the $\infty$ section:
$$
\pi_\infty: S\to C\times S, \quad s\mapsto (\infty, s).
$$
\item[(iii)] Zero: $\cE_i/t_{i-1}({^\tau\!\cE_{i-1}})$ is isomorphic to the direct image
$(\pi_z)_\ast\cH_i$ of a locally free rank-$d$ $\cO_S$-module
$\cH_i$ by the section
$$
\pi_z:S\to C\times S,\quad s\mapsto (z(s),s),
$$
where $z:S\to C$ is a morphism of $\F_q$-schemes such that
$z(S)\subset C-R-\{\infty\}$.
\end{enumerate}

Let $I\neq \emptyset$ be a closed subscheme of $C-R-\{\infty\}$. Let
$(\cE_i,j_i,t_i)$ be a $\cD$-elliptic sheaf over $S$ such that
$z(S)$ is disjoint from $I$. The restriction $\cE_I:=\cE_{i|I\times
S}$ is independent of $i$, and $t$ induces an isomorphism
$^\tau\!\cE_I\cong \cE_I$. A \textit{level-$I$ structure} on
$(\cE_i,j_i,t_i)$ is an $\cO_{I\times S}$-linear isomorphism $\iota:
\cD_I\boxtimes \cO_S\cong \cE_I$, compatible with the action of
$\cD_I$, which makes the following diagram commutative:
$$
\xymatrix{ ^\tau\!\cE_I\ar[rr]^-t & &\cE_I\\
& \cD_I\boxtimes \cO_S \ar[lu]^-{^\tau\!\iota}\ar[ru]_-\iota&}
$$

For a scheme $$z:S\to C':=C-I-R-\{\infty\},$$ denote by
$\Ell_{I}^\cD(S)$ the set of isomorphism classes of $\cD$-elliptic
sheaves over $S$ with level-$I$ structures. There are natural
commuting actions of $\Z$ and $\cD_I^\times$ on $\Ell_{I}^\cD(S)$:
$n\in \Z$ acts by
$$
[n](\cE_i,j_i,t_i;\iota)=(\cE_{i+n},j_{i+n},t_{i+n};\iota)
$$
and $g\in \cD_I^\times$ acts by
$$
(\cE_i,j_i,t_i;\iota)g=(\cE_{i},j_{i},t_{i};\iota\circ g),
$$
where $g$ acts on $\cD_I\boxtimes \cO_S$ via right multiplication on
$\cD_I$.

By (4.1), (5.1) and (6.2) in \cite{LRS}, the functor $S\mapsto
\Ell^{\cD}_{I}(S)/\Z$ is representable by a smooth, projective
scheme $M^\cD_I$ over $C'$ of pure relative dimension $(d-1)$. The
action of $\cD_I^\times$ on $\Ell^\cD_{I}$ induces an action of this
finite group on $M^\cD_I$. We denote by $\bar{M}_I^\cD$ the quotient
of $M_I^\cD$ under the action of the center $Z(\cD_I^\times)\cong
\cO_I^\times$ of $\cD_I^\times$.

Note that if we put $d=1$ and apply the previous definitions with
$\cD=\cO_C$, then we arrive at the notion of $\cO_C$-elliptic
sheaves with level-$I$ structures. This case was considered much
earlier by Drinfeld \cite{Drinf.ES}, who proved that $M_I^{\cO_C}$
is isomorphic to the moduli scheme of rank-$1$ Drinfeld $A$-modules
with level-$I$ structures. This latter moduli scheme is closely
related to Class Field Theory of $F$; see \cite[Thm. 1]{Drinfeld}.

As follows from \cite[pp. 26-29]{Lafforgue}, there is a natural
morphism of schemes over $C'$
$$
\wp: M^\cD_I\to M^{\cO_C}_I
$$
which is compatible with the action of $\cD_I^\times$ in the sense
that $\wp\circ g=\det(g)\circ \wp$ for any $g\in\cD_I^\times\cong
\GL_d(\cO_I)$.

\begin{prop}\label{cor1.6}
The fibres of $\wp$ are geometrically irreducible.
\end{prop}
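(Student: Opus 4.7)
The plan is to combine the rigid-analytic uniformization of $M^\cD_I$ at the place $\infty$ (available because $D$ splits at $\infty$) with strong approximation for the reduced-norm-one subgroup $G^1 := \ker(\Nr : G \to \gm)$ of $G$. First, $\wp$ is a smooth morphism: by Drinfeld's theorem cited above, $M^{\cO_C}_I$ is the moduli scheme of rank-$1$ Drinfeld $A$-modules with level-$I$ structure, hence étale over $C' = C - I - R - \{\infty\}$, while $M^\cD_I$ is smooth over $C'$ of relative dimension $d-1$ by the result of Laumon-Rapoport-Stuhler. Consequently every geometric fibre of $\wp$ is smooth of pure dimension $d-1$, and the proposition reduces to the assertion that each geometric fibre is connected.

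Since $\wp$ is proper and flat with smooth fibres, the number of geometric connected components of its fibres is locally constant on $M^{\cO_C}_I$. Thus it suffices to verify connectedness over a single geometric point in each connected component of $M^{\cO_C}_I$. For this I would invoke the Cherednik-Drinfeld-type uniformization of $M^\cD_I \otimes_F \Fi$, which realizes its set of geometric components as the double-coset space
\[
G(F) \backslash G(\A^\infty) / K_{\cD,I}^\infty
\]
(after accounting for the $\Z$-shift built into $M^\cD_I$), and provides an analogous description for $M^{\cO_C}_I \otimes_F \Fi$. Under this uniformization $\wp$ corresponds, on components, to the map induced by the reduced norm $\Nr : G \to \gm$.

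The proposition is therefore equivalent to the statement that the map
\[
G(F) \backslash G(\A^\infty) / K_{\cD,I}^\infty \;\longrightarrow\; F^\times \backslash (\A^\infty)^\times / \Nr(K_{\cD,I}^\infty)
\]
has singleton fibres. Each such fibre is naturally a quotient of the form $G^1(F) \backslash G^1(\A^\infty) / U$ for some open compact $U \subset G^1(\A^\infty)$. Now $G^1$ is a simply connected and almost simple inner form of $\SL_d$, with $G^1(\Fi) \cong \SL_d(\Fi)$ non-compact because $D$ splits at $\infty$, so strong approximation with respect to $\{\infty\}$ yields $G^1(\A^\infty) = G^1(F) \cdot U$. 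Hence each fibre of $\wp$ is a single quotient $\G \backslash \Omega^d$, with $\Omega^d$ Drinfeld's $(d-1)$-dimensional symmetric space over $\Fi$; this is connected (indeed irreducible) because $\Omega^d$ is, and combined with smoothness this yields geometric irreducibility.

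The main obstacle I anticipate is setting up the rigid-analytic uniformization precisely enough to verify that $\wp$ does indeed correspond to the reduced norm; once this matching is in place, the remainder is a formal combination of strong approximation for $G^1$ and the connectedness of Drinfeld's symmetric space.
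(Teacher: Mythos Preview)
Your argument is correct and lands on the same decisive ingredient as the paper---strong approximation for $G^1$ away from $\infty$---but the route is genuinely different.  The paper does not try to identify $\wp$ with the reduced norm under any uniformization.  Instead it argues as follows: by Stein factorization it suffices to work over the geometric generic point $\bar\eta$, where $M^{\cO_C}_{I,\bar\eta}$ is a finite discrete set, so the fibres of $\wp_{\bar\eta}$ are unions of connected components of $M^{\cD}_{I,\bar\eta}$; since $\wp_{\bar\eta}$ is surjective (from the equivariance $\wp\circ g=\det(g)\circ\wp$ and the transitivity of $\cO_I^\times$ on the target), it is enough to check that source and target have the \emph{same number} of geometric components.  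For the target this is Class Field Theory; for the source the paper reads off $h^0_{I,\eta}$ from the Laumon--Rapoport--Stuhler description of $(H^\bullet_{I,\eta})^{\mathrm{ss}}$ in terms of automorphic representations (Theorem~\ref{thm6.1}), and then invokes strong approximation (Corollary~\ref{cor-new}) to match the two counts.

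The trade-off is this: your approach bypasses the heavy cohomological input of Theorem~\ref{thm6.1}, using instead the rigid-analytic uniformization (the paper's Theorem~\ref{thm-BS}) and the connectedness of $\Omega^d$ to read off $\pi_0$ directly; but you then owe the verification that $\wp$, transported through the uniformization, really is induced by $\Nr$ on the double-coset index sets---the point you flag as the main obstacle.  The paper's counting argument sidesteps this compatibility entirely: it needs only surjectivity of $\wp$, which follows from the elementary equivariance property already recorded before the proposition.  Either path is legitimate; yours is arguably more self-contained once the uniformization is granted, while the paper's is shorter given that Theorem~\ref{thm6.1} is needed anyway later in the paper.
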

\begin{proof}
By Stein factorization theorem, it is enough to show that the fibres
of
$$
\wp_{\bar{\eta}}:
M^\cD_{I,\bar{\eta}}:=M^\cD_I\times_{C'}\Spec(\bar{F})\to
M^{\cO_C}_{I,\bar{\eta}}:=M^{\cO_C}_I\times_{C'}\Spec(\bar{F})
$$
are connected, where $\bar{F}$ denotes a fixed algebraic closure of
$F$. Since $\wp_{\bar{\eta}}$ is $\cD_I^\times$-equivariant and
$\det:\cD_I^\times\to \cO_I^\times$ is surjective,
$\wp_{\bar{\eta}}$ is surjective. Hence it is enough to show that
the number of connected components of $M^\cD_{I,\bar{\eta}}$ and
$M^{\cO_C}_{I,\bar{\eta}}$ are the same.

By Class Field Theory, $M^{\cO_C}_{I,\bar{\eta}}$ is a disjoint
union of $\#\left[F^\times\bs
(\A^\infty)^\times/K^\infty_{\cO_C,I}\right]$ copies of
$\Spec(\bar{F})$ and $\cO_I^\times$ acts transitively on these
points. On the other hand, the number of connected components of
$M^\cD_{I,\bar{\eta}}$ is equal to the 0-th Betti number
$h^0_{I,\eta}$ (in the notation of Section \ref{sec4}), so the
desired claim follows from Corollary \ref{cor-new}.
\end{proof}

\begin{cor}\label{cor-SF}
If $I=\Spec(A/\fp)$ is admissible then $\mvq$ is a smooth,
projective, geometrically irreducible, $(d-1)$-dimensional variety
defined over $\F_o$, which is a form of one of the components of
$\mv$.
\end{cor}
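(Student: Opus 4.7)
The plan is to deduce the corollary from Proposition \ref{cor1.6} by analysing the $\cO_I^\times$-action on the set $\pi_0(\mv)$ of geometric components and showing that it is simply transitive. By the Laumon-Rapoport-Stuhler result cited just before Proposition \ref{cor1.6}, $M^\cD_I$ is smooth projective of relative dimension $d-1$ over $C'$, so the $o$-fibre $M^\cD_{I,o}$ is a smooth projective $\F_o$-scheme of dimension $d-1$; the $\cO_I^\times$-action is defined at the level of the moduli functor, hence over $\F_q$, so the geometric quotient $M^\cD_{I,o}/\cO_I^\times$ is naturally an $\F_o$-scheme whose base change to $\overline{\F}_o$ recovers the $\mvq$ of the introduction. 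Projectivity and dimension $d-1$ pass to the quotient automatically, so smoothness and geometric irreducibility are the only points with any real content.

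For these I would argue in three steps. First, the scalars $\F_q^\times \subset \cO_I^\times$ act trivially on $M^\cD_I$: for $c \in \F_q^\times$ the multiplication-by-$c$ endomorphism of each $\cE_i$ is $\cD$-linear, commutes with all $j_i$ and $t_i$, and intertwines $\iota$ with $\iota \circ c$, so the action factors through $G := \cO_I^\times/\F_q^\times$. Second, by Proposition \ref{cor1.6} the set $\pi_0(\mv)$ is in bijection with the geometric $o$-fibre of $M^{\cO_C}_I$; the class field theory computation inside the proof of Proposition \ref{cor1.6}, specialised to $C = \p^1$ (where $\Pic(\F_q[T]) = 0$ and the only elements of $F^\times$ that are units at every finite place are $\F_q^\times$), identifies this set with $G$ as a torsor. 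Third, the intertwining $\wp \circ g = \det(g) \circ \wp$ shows that $\cO_I^\times \subset \cD_I^\times$ acts on $\pi_0(\mv)$ through the $d$-th power map $c \mapsto c^d$, which is a bijection on $G$ precisely by the admissibility of $I$. Putting these together, $G$ permutes $\pi_0(\mv)$ simply transitively.

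It follows that $\mvq \otimes_{\F_o} \overline{\F}_o \cong \mv/G$ is isomorphic as an $\overline{\F}_o$-variety to any single geometric component of $\mv$, which by Proposition \ref{cor1.6} is smooth, projective and geometrically irreducible of dimension $d-1$. Hence $\mvq$ inherits all these properties, and is by construction an $\F_o$-form of that component, the twist being recorded by the geometric Frobenius action on $\pi_0(\mv)$. The only mildly delicate point in the plan is unwinding the class field theory identification of $F^\times \bs (\A^\infty)^\times / K^\infty_{\cO_C, I}$ with $G$ for $C = \p^1$ and matching the resulting $\cO_I^\times$-action with the $d$-th power map; both are short and routine.
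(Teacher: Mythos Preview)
Your proposal is correct and follows essentially the same line as the paper's proof: the paper also reduces to showing that $\F_\fp^\times/\F_q^\times$ acts simply transitively on the geometric components of $\mv$, using that the action factors through $\F_\fp^\times/\F_q^\times$, that $\wp_o\circ z=z^d\circ\wp_o$, and that admissibility makes the $d$-th power map surjective on $\F_\fp^\times/\F_q^\times$. Your write-up is slightly more explicit about why $\F_q^\times$ acts trivially and about the class field theory identification over $\p^1$, but there is no substantive difference in strategy.
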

\begin{proof}
The group $\F_\fp^\times/\F_q^\times$ acts freely and transitively
on the geometrically irreducible components of $M^{\cO_C}_{I,o}$. On
the other hand, it is easy to see that the action of
$Z(\cD_I^\times)\cong \F_\fp^\times$ on $\mv$ factors through
$\F_\fp^\times/\F_q^\times$. By Proposition \ref{cor1.6},
$\wp_o\circ z=z^d\circ \wp_o$ for $z\in \F_\fp^\times$. Since $\fp$
is admissible, $(\F_\fp^\times)^d$ surjects onto
$\F_\fp^\times/\F_q^\times$. We conclude that
$\F_\fp^\times/\F_q^\times$ acts freely and transitively also on the
geometrically irreducible components of $\mv$. This implies the
claim.
\end{proof}


\section{Volume calculation} This section is of auxiliary nature. Here we compute a certain
volume which is used in Sections \ref{sec3} and \ref{sec4}. This
result should be well-known, but in absence of a convenient explicit
reference we sketch some of the details.

For $x\in |C|$, normalize the Haar measure $dg_x$ on $G(F_x)$ by
$\Vol(\cD_x^\times, dg_x)=1$. Fix the Haar measure $d\bar{g}$ on
$G(\A)$ to be the restricted product measure. This measure will be
called the \textit{canonical product measure}.

Consider the homomorphism
\begin{equation}\label{eq-norm}
\norm{\cdot}:G(\A)\to q^{\Z}
\end{equation}
given by the composition of the reduced norm $\Nr: G(\A)\to
\A^\times$ with the idelic norm $\prod_{x\in |C|}|\cdot|_x:
\A^\times \to q^\Z$. Denote the kernel of this homomorphism by
$G^{1}(\A)$. The group $G(F)$, under the diagonal embedding into
$G(\A)$, lies in $G^{1}(\A)$, thanks to the product formula. The
quotient $G(F)\bs G^1(\A)$ is compact, hence has finite volume. The
main result of this section is the following:

\begin{prop}\label{prop4.1}
$$
\Vol\left(G(F)\bs G^1(\A),
d\bar{g}\right)=\frac{1}{(q-1)}\prod_{i=1}^{d-1}\zeta^R_F(-i).
$$
\end{prop}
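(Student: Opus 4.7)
The plan is to split the volume via the reduced-norm exact sequence of algebraic groups
$$1\to G^{(1)} \to G \xrightarrow{\Nr} \gm \to 1,$$
where $G^{(1)} := \ker\Nr = \SL_1(D)$ is a simply connected semisimple inner form of $\SL_d$. Since the reduced norm $\Nr\colon D_x^\times\to F_x^\times$ is surjective at every place and $F=\F_q(T)$ has no archimedean places, Eichler's theorem (the Hasse principle for reduced norms of central simple algebras) gives $\Nr(G(F))=F^\times$. This produces a short exact sequence of automorphic quotients
$$1\to G^{(1)}(F)\bs G^{(1)}(\A)\to G(F)\bs G^1(\A)\to F^\times\bs(\A^\times)^1\to 1,$$
and Fubini with compatibly-normalized Haar measures (so that the local identity $\Vol(\cD_x^\times)=\Vol(\SL_1(\cD_x))\cdot\Vol(\cO_x^\times)$ forces $\Vol(\SL_1(\cD_x),d\bar g^{(1)})=1$ and $\Vol(\cO_x^\times,dt)=1$ at every place) factors the volume as the product of $\Vol(G^{(1)}(F)\bs G^{(1)}(\A))$ and $\Vol(F^\times\bs(\A^\times)^1)$.

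The idelic factor is elementary: triviality of $\Pic^0(\p^1_{\F_q})$ forces $(\A^\times)^1 = F^\times\cdot\prod_x\cO_x^\times$, and $F^\times\cap\prod_x\cO_x^\times=\F_q^\times$, yielding $\Vol(F^\times\bs(\A^\times)^1,dt)=1/(q-1)$. For the harder factor $\Vol(G^{(1)}(F)\bs G^{(1)}(\A),d\bar g^{(1)})$, I would invoke Kottwitz's theorem that the Tamagawa number of the simply connected semisimple group $G^{(1)}$ equals $1$, then compare Tamagawa and canonical measures place by place. Writing the Tamagawa measure as $q^{(d^2-1)(1-g)}\prod_v|\omega|_v = q^{d^2-1}\prod_v|\omega|_v$ (with $g=0$ for $\p^1$), one computes: at $x\notin R$ the standard calculation $|\SL_d(\F_x)|/q_x^{d^2-1} = \prod_{i=2}^d(1-q_x^{-i})$ applies; at $x\in R$, where $D_x$ is a division algebra with maximal order $\cD_x$, residue skew-field $\cD_x/\Pi_x\cD_x\cong\F_{q_x^d}$, and filtration $\cD_x\supset\Pi_x\cD_x\supset\cdots\supset\Pi_x^d\cD_x=\varpi_x\cD_x$, the compact group $\SL_1(D_x)=\SL_1(\cD_x)$ has a local Tamagawa volume obtainable by Prasad's volume formula (or by an explicit local computation via Fubini on $1\to\SL_1(\cD_x)\to\cD_x^\times\xrightarrow{\Nr}\cO_x^\times\to 1$, tracking the integral-form discrepancy between the split and ramified smooth group schemes).

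Assembling all local contributions and using the $\p^1$ functional equation $\zeta_F(1-s)=q^{1-2s}\zeta_F(s)$, which telescopes to $\prod_{j=2}^d\zeta_F(j) = q^{d^2-1}\prod_{i=1}^{d-1}\zeta_F(-i)$, the global product should collapse to
$$\Vol(G^{(1)}(F)\bs G^{(1)}(\A),d\bar g^{(1)}) = \prod_{i=1}^{d-1}\zeta_F(-i)\cdot\prod_{x\in R}\prod_{i=1}^{d-1}(1-q_x^i) = \prod_{i=1}^{d-1}\zeta_F^R(-i),$$
where the second equality is the definition $\zeta_F^R(-i)=\zeta_F(-i)\prod_{x\in R}(1-q_x^i)$ applied term-by-term. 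Multiplying by the idelic factor $1/(q-1)$ then yields the stated formula. The main obstacle is the ramified local computation: extracting the clean factor $\prod_{i=1}^{d-1}(1-q_x^i)$ at each $x\in R$ requires careful handling of the non-commutative structure of $\cD_x$ (its Jacobson radical modulo $\varpi_x$, the count $|(\cD_x/\varpi_x\cD_x)^\times|=q_x^{d(d-1)}(q_x^d-1)$, and the integral gauge-form adjustment between the split and division-algebra smooth models of $G^{(1)}$), together with consistent bookkeeping of the function-field Tamagawa normalization and the functional equation.
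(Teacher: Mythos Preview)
Your approach is sound but takes a genuinely different route from the paper's. The paper uses Weil's zeta-integral method (from \cite{WeilAdeles}): one integrates the characteristic function $\Phi$ of $\cD\subset D(\A)$ against $\norm{g}^s$ over $G(\A)$, obtaining a global zeta function $\zeta_D(s)$ whose residue at $s=0$ equals $-\Vol(G(F)\bs G^1(\A))/\log q$. The local factors are then computed directly: at a split place $x$ a Cartan-decomposition sum gives $\zeta_{D_x}(s)=\prod_{i=0}^{d-1}\zeta_x(s-i)$, while at a ramified place $x$ (where $D_x$ is division with maximal ideal $\fP$ and $\Nr(\fP)=\varpi_x$) the integral collapses to $\sum_{n\geq 0}q_x^{-sn}=\zeta_x(s)$ in one line. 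Taking the residue of the product at $s=0$ yields the formula immediately.

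The contrast is instructive. Your exact-sequence-plus-Tamagawa approach is more structural and generalizes cleanly, but it imports heavy machinery (the Tamagawa-number theorem for $\SL_1(D)$---over function fields this is Harder's theorem rather than Kottwitz's---and Prasad's volume formula or an equivalent local computation at the ramified places). The paper's method sidesteps all of this: the ramified local factor, which you correctly flag as your main obstacle, is trivial in the zeta-integral framework precisely because the multiplicative structure of a local division algebra is so simple. Your outline is correct and would succeed once the ramified local comparison is carried out, but as written it is a plan rather than a proof; the paper's argument is self-contained and considerably shorter.
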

\begin{proof}
Let $\Phi$ be the characteristic function of $\cD$ in $D(\A)$.
Consider the following integral
$$
\zeta_{D}(s)=\int_{G(\A)} \Phi(g)\norm{g}^s d\bar{g}.
$$
It absolutely converges for $\re(s)>1$, can be meromorphically
continued to the whole plane with a simple pole at $0$, cf.
\cite[$\S$3.1]{WeilAdeles}. Moreover, from the calculations in
\textit{loc.cit.}, one can deduce that
\begin{equation}\label{eq1}
\underset{s=0}{\Res}\ \zeta_{D}(s)=-\Vol\left(G(F)\bs G^1(\A),
d\bar{g}\right) \frac{1}{\log q}.
\end{equation}
Next, we have the decompositions $\Phi=\prod_x \Phi_x$ and
$\zeta_{D}(s)=\prod_x \zeta_{D_x}(s)$, where $\Phi_x$ is the
characteristic function of $\cD_x$ in $D_x$ and
$$
\zeta_{D_x}(s)=\int_{G(F_x)} \Phi_x(g_x)\cdot |\Nr(g_x)|_x^s dg_x.
$$
If $D$ is split at $x$ then, considering the decomposition of
$\M_d(\cO_x)$ into left $\cD_x^\times\cong\GL_d(\cO_x)$-cosets, we
get
\begin{align*}
\zeta_{D_x}(s)
&=\int_{\M_d(\cO_x)-\{0\}}|\Nr(g_x)|_x^s d g_x\\
&=\Vol(\cD_x^\times)\sum_{(n_i)\in \Z^d_{\geq
0}}q_x^{(-sn_1+(-s+1)n_2+\cdots+(-s+d-1)n_d)}\\&=\zeta_x(s)\cdot\zeta_x(s-1)\cdots\zeta_x(s-(d-1)).
\end{align*}
On the other hand, if $D$ is ramified at $x$ then $D_x$ is a
division algebra by assumption, hence $\cD_x$ has a unique maximal
ideal $\fP\lhd \cD_x$ and $\Nr(\fP)=\varpi_x$; see \cite[Thm.
24.13]{Reiner}. Thus,
$$
\zeta_{D_x}(s) = \int_{\cD_x-\{0\}}|\Nr(g_x)|_x^s
dg_x=\Vol(\cD_x^\times)\sum_{n\in \Z_{\geq 0}} q_x^{-sn}=
\zeta_x(s).
$$
Combining these local calculations,
\begin{equation}\label{eq2}
\underset{s=0}{\Res}\ \zeta_{D}(s)=-\frac{1}{(q-1)\log
q}\prod_{i=1}^{d-1}\zeta_F^R(-i).
\end{equation}
From (\ref{eq1}) and (\ref{eq2}) we deduce
$$
\Vol\left(G(F)\bs G^1(\A),
d\bar{g}\right)=\frac{1}{(q-1)}\prod_{i=1}^{d-1}\zeta_{F}^R(-i),
$$
as was required.
\end{proof}


\section{Supersingular $\cD$-elliptic sheaves}\label{sec3}

In this section we discuss $\cD$-elliptic sheaves over
$k:=\overline{\F}_o$ and the rationality of the corresponding points
on the moduli schemes.


\subsection{Isogeny classes} One of the key preliminary results in
\cite{LRS} is the description of the points on the closed fibres of
$M^\cD_{I}\to C'$. This is done in two steps, similar to the
description of the set of abelian varieties over finite fields: one
starts by describing the isogeny classes of $\cD$-elliptic sheaves
over $k$ (as in Honda-Tate theory) and then parametrizes
$\cD$-elliptic sheaves in each isogeny class. We start by recalling
this description.

\begin{defn}(\cite[(9.11)]{LRS}) A \textit{$(D,\infty,o)$-type} is a pair $(\wF,\wP)$, where
$\wF$ is a finite separable field extension of $F$ and $\wP\in
\wF^\times\otimes_\Z \Q$, satisfying the following conditions:
\begin{itemize}
\item For a proper subfield $\wF'\subsetneqq \wF$, $\wP\not\in
(\wF')^\times\otimes_\Z \Q$.
\item $[\wF:F]$ divides $d$.
\item $F_\infty\otimes_F \wF$ is a field and, if $\winf$ is the
unique place of $\wF$ which divides $\infty$, we have
$$
\deg(\winf)\cdot v_{\winf}(\wP)=-[\wF:F]/d.
$$
\item There exists a unique place $\wo\neq \winf$ of $\wF$ such that
$v_{\wo}(\wP)\neq 0$; moreover $\wo$ divides $o$.
\item For each place $x$ of $F$ and each $\widetilde{x}$ of $\wF$
dividing $x$, we have
$$
(d[\wF_{\widetilde{x}}:F_x]/[\wF:F])\cdot \inv_x(D)\in \Z.
$$
\end{itemize}
\end{defn}

In \cite[(9.2)]{LRS} the authors introduce the notion of isogenies
between $\cD$-elliptic sheaves over $k$. We will not recall this
definition; for our purposes it is enough to know \cite[(9.13)]{LRS}
that there is a canonical bijection between the set of isogeny
classes of $\cD$-elliptic sheaves over $k$ and the set of
isomorphism classes of $(D,\infty,o)$-types.

Assume $I\subset C-R-\{o,\infty\}$ and denote by $
\mv:=M^\cD_{I}\times_{C'}\Spec(\F_o)$ the fibre of $M^\cD_{I}$ over
$o$. Fix a $(D,\infty,o)$-type $(\wF,\wP)$ and denote by
$$
\mv(k)_{(\wF,\wP)}\subset \mv(k)
$$
the set of isomorphism classes of $\cD$-elliptic sheaves over $k$
which are in the isogeny class corresponding to $(\wF,\wP)$.

Let $\Delta$ be the central division algebra over $\wF$ with
invariants
$$
\inv_{\widetilde{x}}\Delta=\left\{
               \begin{array}{ll}
                 [\wF:F]/d, & \hbox{if $\wx=\winf$;} \\
                 -[\wF:F]/d, & \hbox{if $\wx=\wo$;} \\
                 {[\wF_{\wx}:F_x]}\cdot\inv_x(D), & \hbox{otherwise.}
               \end{array}
             \right.
$$
Let
$$
h=[\wF_{\wo}:F_o]d/[\wF:F].
$$
$\Delta^\times$ naturally acts on the Dieudonn\'e modules of a
$\cD$-elliptic sheaf in the isogeny class of $(\wF, \wP)$, and these
actions induce group homomorphisms
$$
\left\{
               \begin{array}{ll}
                 \Delta^\times\hookrightarrow G(\A^{\infty,o})\\
                 \Delta^\times\hookrightarrow \GL_{d-h}(F_o)\\
                 \Delta^\times\hookrightarrow
N_{o,h}^\times \overset{\Nr}{\To}F_o^\times\overset{v_o}{\To}\Z,
               \end{array}
             \right.
$$
where $N_{o,h}$ is the central division algebra over $F_o$ with
invariant $-1/h$; see \cite[p. 270]{LRS}. The main result of
\cite[$\S$10]{LRS} is the following:
\begin{thm}\label{thm-combin} There is a bijection
$$
\mv(k)_{(\wF,\wP)}\overset{\sim}{\To}\Delta^\times\bs(Y_I^{\infty,o}\times
Y_o^{\wo}\times Y_{\wo}),
$$
compatible with the action of $\cD_I^\times$, where
\begin{align*}
Y_I^{\infty,o}:=G(\A^{\infty,o})/\KI^{\infty,o},\quad
Y_o^{\wo}:=\GL_{d-h}(F_o)/\GL_{d-h}(\cO_o),\quad Y_{\wo}:=\Z.
\end{align*}
The action of $\Frob_o$ on $\mv(k)_{(\wF,\wP)}$ corresponds to
translation by $1$ on $Y_{\wo}=\Z$.
\end{thm}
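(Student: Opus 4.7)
The plan is to prove Theorem \ref{thm-combin} by a function-field analogue of the Honda--Tate parametrization of abelian varieties in an isogeny class: fix a base object $\cE_0$ in the prescribed isogeny class, compute its quasi-endomorphism algebra, and then parametrize all other objects by quasi-isogenies from $\cE_0$ modulo the global automorphism group.

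First I would verify that $\End^\circ(\cE_0)=\Delta$. The center must be exactly $\wF$ because a $(D,\infty,o)$-type is minimal by hypothesis: no proper subfield $\wF'\subsetneq\wF$ contains $\wP$, so the center of the endomorphism algebra cannot be smaller. The local invariants at each place $\wx$ of $\wF$ are then read off from the local structure of $\cE_0$: at $\winf$ and $\wo$ they are $\pm[\wF:F]/d$, forced by the slope data $v_{\winf}(\wP)$ and $v_{\wo}(\wP)$ prescribed by the type, while at every other place they equal $[\wF_{\wx}:F_x]\cdot\inv_x(D)$, since away from $\infty$ and $o$ the sheaf locally looks like $\cD_x$ itself and there is no Frobenius twist to change the invariant. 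These match the invariants defining $\Delta$.

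Next, for any $\cE\in\mv(k)_{(\wF,\wP)}$, pick a quasi-isogeny $\phi:\cE_0\to\cE$ and analyze it place by place. Away from $\infty$ and $o$, $\phi$ determines a $\cD_x$-stable lattice in the $\A^{\infty,o}$-rational Tate module, with level-$I$ structure refining $\cD_x^\times$-orbits to $K_{\cD,I,x}$-orbits at $x\in |I|$; globally these assemble into a point of $Y_I^{\infty,o}=G(\A^{\infty,o})/K_{\cD,I}^{\infty,o}$. At $\infty$, the pole condition together with the slope $\deg(\winf)v_{\winf}(\wP)=-[\wF:F]/d$ rigidifies the lattice, contributing no parameter. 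At $o$, one invokes the Dieudonn\'e theory of \cite{LRS}: the rational Dieudonn\'e module splits, as dictated by $\wP$, into an \'etale summand of $F_o$-rank $d-h$ and an isoclinic summand of slope $1/h$ and $F_o$-rank $h$ carrying an action of $N_{o,h}$. Lattices in the \'etale part form the coset space $Y_o^{\wo}=\GL_{d-h}(F_o)/\GL_{d-h}(\cO_o)$, while lattices in the isoclinic part are a principal homogeneous $\Z$-set $Y_{\wo}$ under the valuation of the reduced norm $N_{o,h}^\times\to\Z$. Two quasi-isogenies produce the same object of $\mv(k)_{(\wF,\wP)}$ exactly when they differ by an element of $\Aut^\circ(\cE_0)=\Delta^\times$ acting diagonally through the three embeddings listed before the theorem, yielding the required double coset description; compatibility with the $\cD_I^\times$-action is automatic from the definition of the $K_{\cD,I}^{\infty,o}$-cosets.

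The Frobenius assertion follows because $\Frob_o$ pulls back $\cE_0$ by the $q_o$-th power Frobenius of $k$: on the slope-$1/h$ isoclinic summand at $o$ this shifts a Dieudonn\'e lattice to one of reduced-norm valuation raised by $1$, while leaving the \'etale summand and all non-$o$ adelic data unchanged up to quasi-isogeny; this is precisely translation by $1$ on the $Y_{\wo}=\Z$ factor. The hardest part of the argument is the $o$-adic Dieudonn\'e analysis: one must set up the theory of $F$-isocrystals over $k$ with $\cD$-structure, produce the decomposition into isoclinic pieces forced by $(\wF,\wP)$, and check that the $\Delta_{\wo}^\times$-action on the lattice set is exactly the one inducing the displayed embeddings into $\GL_{d-h}(F_o)$ and $N_{o,h}^\times\overset{\Nr}{\to}F_o^\times\overset{v_o}{\to}\Z$. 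Once that is in place, the $\A^{\infty,o}$ factor is a routine rigidity statement and the Frobenius description becomes essentially tautological.
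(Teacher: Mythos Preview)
The paper does not prove this theorem at all: it is quoted verbatim as ``the main result of \cite[\S10]{LRS}'' and used as a black box. So there is no proof in the paper to compare against.

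That said, your outline is a faithful summary of the strategy actually carried out in \cite[\S\S9--10]{LRS}: fix a base $\cD$-elliptic sheaf $\cE_0$ in the isogeny class, identify $\End^\circ(\cE_0)$ with $\Delta$ via the local invariants dictated by $(\wF,\wP)$, and then parametrize isomorphism classes by quasi-isogenies modulo $\Delta^\times$, with the local data splitting into the prime-to-$\{o,\infty\}$ Tate lattice, the \'etale part of the Dieudonn\'e module at $o$, and the isoclinic part at $o$. One small imprecision: the isoclinic summand at $o$ has slope $-1/h$ (matching $\inv_o(N_{o,h})=-1/h$), not $1/h$; and what you call ``rigidifying'' at $\infty$ is in \cite{LRS} the passage to the quotient by the $\Z$-action already built into the moduli problem, rather than an automatic triviality. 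But the architecture is right, and nothing in your sketch is a genuine gap so much as a pointer to the substantial local computations in \cite{LRS} that would have to be filled in.
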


\subsection{Rational points}
We say that a $\cD$-elliptic sheaf over $k$ is
\textit{supersingular} if in its $(D,\infty,o)$-type
$\widetilde{F}=F$. It is not hard to show that all supersingular
$\cD$-elliptic sheaves are isogenous \cite[Prop. 10.2.1]{Boyer},
i.e., there is a unique $(D,\infty,o)$-type $(F,\Pi)$.

We have an action of $\cD_I^\times\cong \GL_d(\cO_I)$ on $\mv$ via
its action on level structures in the moduli problem. Denote the
quotient of $\mv$ under the action of the center
$Z(\cD_I^\times)\cong \cO_I^\times$ of $\cD_I^\times$ by
$$
\mvq:=\mv/Z(\cD_I^\times).
$$
Denote the preimage of $Z(\cD_I^\times)$ in
$(\cD^{\infty,o})^\times$ under
$(\cD^{\infty,o})^\times\twoheadrightarrow \cD_I^\times$ by
$\bKI^{\infty,o}$, and denote by $\mvq(k)_{(\wF,\wP)}$ the image of
$\mv(k)_{(\wF,\wP)}$ under the quotient map $M^\cD_{I,o}\to
\bar{M}^\cD_{I,o}$.

\begin{prop}\label{prop-q.c}
There is a bijection
$$
\mvq(k)_{(\wF,\wP)}\overset{\sim}{\To}\Delta^\times\bs(\bar{Y}_I^{\infty,o}\times
Y_o^{\wo}\times Y_{\wo}),
$$
where $\bar{Y}_I^{\infty,o}:=G(\A^{\infty,o})/\bKI^{\infty,o}$. The
action of $\Frob_o$ on $\mvq(k)_{(\wF,\wP)}$ corresponds to
translation by $1$ on $Y_{\wo}=\Z$.
\end{prop}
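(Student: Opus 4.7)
The plan is to deduce this proposition directly from Theorem \ref{thm-combin} by taking the quotient by the central subgroup $Z(\cD_I^\times)\cong \cO_I^\times$ on both sides of the stated bijection. First I would observe that the action of $\cD_I^\times$ on $M^\cD_{I,o}$ arises purely from its action on the level structure at $I$, which is disjoint from $\{o,\infty\}$; correspondingly, on the adelic side of Theorem \ref{thm-combin} the group $\cD_I^\times$ acts only on the factor $Y_I^{\infty,o}=G(\A^{\infty,o})/K_{\cD,I}^{\infty,o}$, by right translation through any lift $(\cD^{\infty,o})^\times \twoheadrightarrow \cD_I^\times$. The action on $Y_o^{\wo}$ and $Y_{\wo}$ is trivial.

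Next I would compute the quotient of $Y_I^{\infty,o}$ by $Z(\cD_I^\times)$. By definition, $\bar{K}_{\cD,I}^{\infty,o}$ fits into a short exact sequence
\begin{equation*}
1 \longrightarrow K_{\cD,I}^{\infty,o} \longrightarrow \bar{K}_{\cD,I}^{\infty,o} \longrightarrow Z(\cD_I^\times) \longrightarrow 1,
\end{equation*}
so the orbit of a coset $g K_{\cD,I}^{\infty,o}$ under $Z(\cD_I^\times)\cong\cO_I^\times$ is precisely $g\bar{K}_{\cD,I}^{\infty,o}$. Passing to the quotient therefore yields the identification
\begin{equation*}
Y_I^{\infty,o}/Z(\cD_I^\times) \;=\; G(\A^{\infty,o})/\bar{K}_{\cD,I}^{\infty,o} \;=\; \bar{Y}_I^{\infty,o}.
\end{equation*}
Since the left $\Delta^\times$-action on $Y_I^{\infty,o}$ is by multiplication from the left (through $\Delta^\times\hookrightarrow G(\A^{\infty,o})$), it commutes with the right $Z(\cD_I^\times)$-action, and hence the bijection in Theorem \ref{thm-combin} descends to a bijection
\begin{equation*}
\mvq(k)_{(\wF,\wP)} \;\overset{\sim}{\longrightarrow}\; \Delta^\times\bs\bigl(\bar{Y}_I^{\infty,o}\times Y_o^{\wo}\times Y_{\wo}\bigr).
\end{equation*}

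Finally, the assertion about $\Frob_o$ is immediate: Theorem \ref{thm-combin} says $\Frob_o$ acts by $n\mapsto n+1$ on $Y_{\wo}=\Z$, which commutes with the $Z(\cD_I^\times)$-action (as the latter operates only on the $Y_I^{\infty,o}$ factor), and therefore descends to the quotient $\mvq(k)_{(\wF,\wP)}$ with the same description. The only substantive point in the argument is the exact sequence identifying $\bar{K}_{\cD,I}^{\infty,o}/K_{\cD,I}^{\infty,o}$ with $\cO_I^\times$, which is built into the definition of $\bar{K}_{\cD,I}^{\infty,o}$; the rest is essentially bookkeeping.
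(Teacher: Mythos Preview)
Your argument is correct and is precisely the intended deduction from Theorem~\ref{thm-combin}; the paper's own proof consists of the single sentence ``This is a consequence of Theorem~\ref{thm-combin},'' and you have simply spelled out the bookkeeping behind that sentence.
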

\begin{proof}
This is a consequence of Theorem \ref{thm-combin}.
\end{proof}

From this proposition it is not hard to deduce the following
criterion for the existence of $\F_o^{(n)}$-rational points on
$\mvq$, cf. \cite[p. 278]{LRS}:
$$
P=\Delta^\times[g^{\infty,o}\bar{K}^{\infty,o}_{\cD,I},g^{\wo}_o\GL_{d-h}(\cO_o),m_{\wo}]\in
\mvq(k)_{(\wF,\wP)}.
$$
is rational over $\F_o^{(n)}$ if and only if there exists $\delta\in
\Delta^\times$ such that
\begin{equation}\label{conds}
\left\{
               \begin{array}{ll}
                 (g^{\infty,o})^{-1}\delta g^{\infty,o}\in \bKI^{\infty,o}\\
                 (g^{\wo}_o)^{-1}\delta g^{\wo}_o\in \GL_{d-h}(\cO_o)\\
                 v_{\wo}(\Nr(\delta))=n\deg(o)/\deg(\wo)
               \end{array}
             \right.
\end{equation}
where $\Nr$ is the reduced norm on $\Delta$.

\vspace{0.1in}

Let $B$ be a finite dimensional $K$-algebra, where $K$ is a field.
The left multiplication by $\alpha\in B$ gives a $K$-linear
transformation of $B$ (as a finite dimensional vector space over
$K$). Define the characteristic polynomial $\chp_{B/K}\alpha\in
K[X]$ of $\alpha$ to be the characteristic polynomial of this
transformation, and $\det_{B/K}(\alpha)$ be its determinant.

\begin{prop}\label{prop2.3}
Suppose $n$ is fixed and $I$ has in its support a prime $\fp\lhd A$
whose degree is large enough compared to $n$. Then
$$
\mvq(\F_o^{(n)})=\mvq(\F_o^{(n)})_{(F,\Pi)}.
$$
In other terms, the images of supersingular points on $\mvq$ are the
only possible $\F_o^{(n)}$-rational points.
\end{prop}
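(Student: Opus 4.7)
My plan is to use the rationality criterion displayed right after Proposition~\ref{prop-q.c} and argue by contradiction. Assume that $P\in\mvq(\F_o^{(n)})$ lies in the isogeny class of a $(D,\infty,o)$-type $(\wF,\wP)$ with $\wF\neq F$, so by \eqref{conds} there exists $\delta\in\Delta^\times$ meeting the three conditions. Attach to $\delta$ the polynomial $Q_\delta(X)\in F[X]$, defined as the characteristic polynomial of $\delta$ acting on the rank-$d$ $F_\fp$-module coming from the embedding $\Delta^\times\hookrightarrow G(F_\fp)=\GL_d(F_\fp)$; it is monic of degree $d$ and is given intrinsically by $Q_\delta=N_{\wF/F}(P_\delta)$, where $P_\delta\in\wF[X]$ is the reduced characteristic polynomial of $\delta$ over the center $\wF$ of $\Delta$.

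The first condition in \eqref{conds} at the prime $\fp$ in the support of $I$ says that a conjugate of $\delta$ lies in $\GL_d(\cO_\fp)$ and reduces modulo $\fp$ to a scalar matrix $\lambda\cdot\mathrm{Id}$ with $\lambda\in\F_\fp^\times$, so that
$$Q_\delta(X)\equiv(X-\lambda)^d\pmod{\fp}.$$
Writing $Q_\delta(X)=X^d+a_1X^{d-1}+\cdots+a_d$, I would combine integrality of $\delta$ at every finite place $\neq o$ with the valuation condition at $\wo$ in \eqref{conds} to show that $a_i\in A$ and $v_o(a_d)=n$. The decisive estimate is then a degree bound $\deg(a_i)\le Cn$ with $C=C(d,\deg(o))$ independent of $I$; this rests on the Weil-type control of the Newton slopes at $\infty$ of Frobenius acting on the Tate module of a $\cD$-elliptic sheaf, a consequence of the main theorems of \cite{LRS}.

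Assume first $p\nmid d$, set $\mu:=-a_1/d\in A$, and $b_i:=a_i-\binom{d}{i}(-\mu)^i\in A$ for $2\le i\le d$. The mod-$\fp$ congruence above forces $\fp\mid b_i$, while $\deg(b_i)$ remains bounded by a constant times $n$, independent of $I$. Choosing $\deg(\fp)>\deg(b_i)$ therefore forces $b_i=0$ for every $i$, i.e., $Q_\delta(X)=(X-\mu)^d$. Unique factorization in $\wF[X]$ applied to the identity $N_{\wF/F}(P_\delta)=(X-\mu)^d$ then gives $P_\delta=(X-\mu)^{d_0}$ with $d_0=d/[\wF:F]$, so $\delta-\mu$ is nilpotent in the division algebra $\Delta$, and hence $\delta=\mu\in F^\times$.

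Such a central $\delta$ is incompatible with \eqref{conds}: via $\Delta^\times\hookrightarrow \GL_{d-h}(F_o)$ it acts as the scalar $\mu\cdot\mathrm{Id}$, so the second condition demands $v_o(\mu)=0$, while $\Nr(\delta)=\delta^{d_0}$ together with the third condition forces $v_o(\mu)=n/(d_0[\wF_{\wo}:F_o])>0$, a contradiction. The residual case $p\mid d$ is handled by replacing $\mu=-a_1/d$ by a carefully chosen low-degree lift of $\lambda\in\F_\fp$ to $A$, leaving the rest of the argument unchanged. The main obstacle is the degree bound in Step~2; without an estimate on $\deg(a_i)$ that is polynomial in $n$ and independent of $I$, the mod-$\fp$ congruence would be too weak to pin down $Q_\delta$ globally and the contradiction could not be reached.
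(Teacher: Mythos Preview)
Your overall strategy---extract $\delta$ from the rationality criterion \eqref{conds}, use the congruence at $\fp$ to pin down its characteristic polynomial, and conclude $\delta\in F^\times$---parallels the paper's, but two of your steps have genuine gaps.

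\textbf{The degree bound.} You justify $\deg(a_i)\le Cn$ by invoking ``Weil-type control of the Newton slopes at $\infty$ of Frobenius acting on the Tate module,'' but this is the wrong tool: $\delta$ is an element of $\Delta^\times$, not a Frobenius endomorphism, and there is no Tate module at the place $\infty$ in the usual sense. The correct (and far more elementary) argument is the one the paper uses: bound only the constant term via the reduced norm. Since a conjugate of $\delta$ lies in $\bKI^{\infty,o}\subset(\cD^{\infty,o})^\times$, the element $\delta$ is a unit at every finite place except possibly $o$, so $c_0:=\pm N_{F[\delta]/F}(\delta)\in A$ is supported only at $o$, and the third condition in \eqref{conds} bounds $v_o(c_0)$, hence $\deg(c_0)$. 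The paper then works with the \emph{minimal} polynomial $f_\delta$ and its discriminant: the congruence $f_\delta\equiv(X-\lambda)^{\deg f_\delta}\pmod\fp$ forces $\fp\mid\disc(f_\delta)$ unless $f_\delta$ is linear, while $\deg(\disc(f_\delta))$ is controlled by $\deg(c_0)$ (here one uses that $F[\delta]\otimes_F F_\infty\hookrightarrow\Delta\otimes_F F_\infty$ embeds into a division algebra, so there is a single place of $F[\delta]$ over $\infty$ and all roots of $f_\delta$ share the same $\infty$-valuation). If you want to salvage your approach with $Q_\delta$, the same local observation at $\infty$ would give you $\deg(a_i)\le(i/d)\deg(a_d)+O(1)$ directly---no Weil bounds needed.

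\textbf{The case $p\mid d$.} Your suggested fix---replace $\mu=-a_1/d$ by a ``low-degree lift of $\lambda\in\F_\fp$ to $A$''---does not work: a generic $\lambda\in\F_\fp$ has no lift to $A$ of degree less than $\deg(\fp)-1$, so the resulting $b_i$ would have degree growing with $\deg(\fp)$, and the argument collapses. The paper's case division is different and robust: it splits according to whether $F[\delta]/F$ is separable or not, and in the inseparable case replaces $\delta$ by $\delta'=\delta^{p^m}$ (with $p^m$ bounded in terms of $d$), which is separable and still satisfies \eqref{conds} with $n$ replaced by $np^m$. This reduces everything to the separable case.

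Finally, once you reach $\delta\in F^\times$, the paper concludes more directly: by \cite[Lem.~3.3.6]{LaumonCDV} one has $\wF\subset F[\delta]=F$, so $\wF=F$. Your contradiction via the second and third conditions in \eqref{conds} is valid but unnecessary.
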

\begin{proof} Suppose $P\in
\mvq(\F_o^{(n)})_{(\wF,\wP)}$. We need to show $\wF=F$. Let
$\delta\in\Delta^\times$ be an element corresponding to $P$. Let
$F'=F[\delta]$ be the field generated by $\delta$ over $F$ and
$f_\delta\in F[X]$ be the minimal polynomial of $\delta$. The
conditions in (\ref{conds}) imply that $\delta$ is integral over
$A$, so $f_\delta$ is a monic irreducible polynomial in $A[X]$.

First, assume $F'/F$ is separable. Let $\disc(\delta)\in A$ be the
discriminant of $f_\delta$. We can consider $\Delta$ as a finite
dimensional algebra over $F$. Since $f_\delta$ divides
$\chp_{\Delta/F}\delta$, the constant term $c_0(\delta)$ of
$f_\delta$ divides $\det_{\Delta/F}(\delta)$. By
\cite[(9.12)]{Reiner},
$$
\mathrm{det}_{\Delta/F}(\delta)=\Nr_{\wF/F}(\mathrm{det}_{\Delta/\wF}(\delta)).
$$
On the other hand, since $\Delta$ is a central division algebra over
$\wF$ of dimension $\left(\frac{d}{[\wF:F]}\right)^2$,
$$
\mathrm{det}_{\Delta/\wF}(\delta)=\Nr(\delta)^{d/[\wF:F]}.
$$
Since $\Nr(\delta)$ has zero valuation at every finite place of
$\wF$ except at $\wo$ where the valuation is bounded in terms of
$n$, $d$ and $\deg(o)$, we conclude that $\deg(c_0(\delta))$ is
bounded in terms of $n$, $d$ and $\deg(o)$. The degree of $f_\delta$
is bounded by $d$, so from the definition of discriminant,
$\deg(\disc(\delta))\leq d(d-1)\deg(c_0(\delta))$. We conclude that
$\deg(\disc(\delta))$ is bounded in terms of $n$, $d$ and $\deg(o)$.

On the other hand, the fact that $\delta$ can be conjugated to lie
in $\bKI^{\infty,o}$ implies that $f_\delta$ modulo $\fp$ is a power
of a linear polynomial. Therefore, $\fp$ must divide
$\disc(\delta)$, unless $f_\delta$ is a linear polynomial. Since the
degree of the discriminant of $\delta$ is bounded, if $\deg(\fp)$ is
large enough compared to $n$, then $f_\delta$ has to be a linear
polynomial, so $\delta\in F$ and $F'=F$. Finally, by \cite[Lem.
3.3.6]{LaumonCDV}, $\wF\subset F'$. Thus, $\wF=F$ as we wanted to
show.

Now assume $F'/F$ is inseparable. There is a minimal $m\geq 1$ such
that $\delta'=\delta^{p^m}$ is separable over $F$. Since $F'\subset
\Delta$, the power $p^m$ is bounded in terms of $d$. It is clear
that if $\delta$ satisfies the conditions in (\ref{conds}), then
$\delta'$ satisfies similar conditions with $n$ replaced by
$n':=n\cdot p^m$. It is also clear that if $P\in
\mvq(\F_o^{(n)})_{(\wF,\wP)}$ then $P\in
\mvq(\F_o^{(n')})_{(\wF,\wP)}$ and $\delta'$ is its corresponding
element of $\Delta^\times$. This reduced the situation to the
initial case and finishes the proof.
\end{proof}

\begin{prop}\label{prop2.4} The images of supersingular points are
rational over $\F_o^{(d)}:$
$$
\mvq(\F_o^{(d)})_{(F,\Pi)}=\mvq(k)_{(F,\Pi)}.
$$
\end{prop}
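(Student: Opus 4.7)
The plan is to use the explicit rationality criterion of (\ref{conds}), derived from Proposition \ref{prop-q.c}. For a supersingular $\cD$-elliptic sheaf, the type is $(\wF,\wP) = (F,\Pi)$, so $\wF = F$ and $\wo = o$; consequently $h = [\wF_\wo:F_o]\cdot d/[\wF:F] = d$, which makes the factor $\GL_{d-h}(\cO_o) = \GL_0(\cO_o)$ trivial and thus voids the middle condition in (\ref{conds}). Since moreover $\deg(o)/\deg(\wo) = 1$, the criterion reduces to: $P \in \mvq(k)_{(F,\Pi)}$ is $\F_o^{(d)}$-rational iff some $\delta \in \Delta^\times$ satisfies $(g^{\infty,o})^{-1}\delta g^{\infty,o} \in \bKI^{\infty,o}$ and $v_o(\Nr(\delta)) = d$.

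The key idea is to exhibit a universal witness, independent of the point $P$. The natural choice is the central element $\delta = \varpi_o \in F^\times \subset \Delta^\times$, taken concretely as the monic irreducible polynomial in $A = \F_q[T]$ cutting out $o$. Centrality immediately trivializes the conjugation condition, reducing it to $\varpi_o \in \bKI^{\infty,o}$. This is direct: at every finite place $x \neq o$ one has $v_x(\varpi_o) = 0$, so $\varpi_o \in \cO_x^\times \subset \cD_x^\times$ (by centrality), placing $\varpi_o$ in $(\cD^{\infty,o})^\times$; its image in $\cD_I$ lies in the central subring $\cO_I$ and is a unit there since $I$ is disjoint from $o$, hence lands in $Z(\cD_I^\times) \cong \cO_I^\times$. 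For the norm condition, since $\Delta$ is a central simple $F$-algebra of reduced degree $d$, the reduced norm of a central element $c \in F^\times$ is $c^d$, so $\Nr(\varpi_o) = \varpi_o^d$ and $v_o(\Nr(\varpi_o)) = d$, as required.

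Because this $\delta$ does not depend on $P$, the same witness simultaneously establishes $\F_o^{(d)}$-rationality for every point of $\mvq(k)_{(F,\Pi)}$, giving the asserted equality. There is no genuine obstacle in this argument; the proposition is essentially an arithmetic coincidence of the supersingular locus, reflecting that $\wo = o$ with $\wF = F$ forces the target Frobenius order to match precisely what a global uniformizer of $F$ at $o$ produces under the reduced norm.
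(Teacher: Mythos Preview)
Your proof is correct and is essentially identical to the paper's: both reduce to the two conditions of (\ref{conds}) in the supersingular case and then take for $\delta$ a central element of $F^\times$ with divisor $o-\deg(o)\cdot\infty$ on $C$ (your monic irreducible generator of $o$ in $A$ is precisely the paper's $f$), using centrality to kill the conjugation condition and $\Nr(c)=c^d$ for central $c$ to obtain $v_o(\Nr(\delta))=d$.
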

\begin{proof} Since $\wF=F$, $h=d$ and there is
a unique $\wo$ dividing $o$, namely $o$ itself. Hence a point $P$ on
$M_{I,o}(k)_{(F,\Pi)}$ is given by $
\Delta^\times[g^{\infty,o}\bKI^{\infty,o},m_{o}]$, and $P$ is
rational over $\F_o^{(d)}$ if there is $\delta\in \Delta^\times$
such that
$$
\left\{
               \begin{array}{ll}
                 (g^{\infty,o})^{-1}\delta g^{\infty,o}\in \bKI^{\infty,o}\\
                 v_o(\Nr(\delta))=d.
               \end{array}
             \right.
$$
Let $f\in F^\times$ be an element whose divisor on $C$ is
$(f)=[o-\deg(o)\infty]$. Take $\delta=f\in F^\times\hookrightarrow
\Delta^\times$, as an element of the center of $\Delta^\times$. Then
$\Nr(f)=f^d$, so $v_o(\Nr(f))=d$. We clearly have $f\in
\bKI^{\infty,o}$, so $(g^{\infty,o})^{-1}f g^{\infty,o}=f\in
\bKI^{\infty,o}$, and the first condition is also satisfied. (Note
that $f$ usually will not be in $\KI^{\infty,o}$.)
\end{proof}

\begin{thm}\label{thm2.5}
Let $S=R\cup \{\infty,o\}$. Let $I=\Spec(A/\fp)\neq o$ be
admissible. For all but finitely many $I$,
$$
\# \mvq(\F_o^{(d)})=\#\PGL_d(\F_\fp)\cdot
\prod_{i=1}^{d-1}\zeta_{F}^S(-i).
$$
\end{thm}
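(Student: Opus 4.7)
The plan is to rewrite $\#\mvq(\F_o^{(d)})$ as an adelic double coset count and then convert it into a volume ratio using the argument of Proposition \ref{prop4.1} applied to the centralizer division algebra $\Delta$. First, combining Propositions \ref{prop2.3} and \ref{prop2.4}, for admissible $I = \Spec(A/\fp)$ with $\deg(\fp)$ sufficiently large (excluding only finitely many $I$) one has $\mvq(\F_o^{(d)}) = \mvq(k)_{(F,\Pi)}$. Specializing Proposition \ref{prop-q.c} to the supersingular case $\wF = F$ forces $h = d$, the factor $Y_o^{\wo}$ collapses, and the bijection becomes
$$
\mvq(k)_{(F,\Pi)}\;\cong\;\Delta^\times\bs\bigl(G(\A^{\infty,o})/\bKI^{\infty,o}\times\Z\bigr).
$$

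Next I would reformulate this adelically. Identifying the $\Z$ factor with $\Delta_o^\times/\cO_{\Delta_o}^\times$ via $v_o\circ\Nr$ (an isomorphism because $\Delta_o$ is a division algebra of invariant $-1/d$) and using $\Delta(\A^\infty)^\times = \Delta_o^\times\times G(\A^{\infty,o})$, the right-hand side becomes $\Delta^\times\bs\Delta(\A^\infty)^\times/K_f$ with $K_f := \cO_{\Delta_o}^\times\times\bKI^{\infty,o}$. Setting $\bar K := \cO_{\Delta_\infty}^\times\times K_f$, the key intermediate step is to show that projection onto the finite adeles induces a bijection $\Delta^1(\A)/\bar K\xrightarrow{\sim}\Delta(\A^\infty)^\times/K_f$: surjectivity uses that $|\Nr|_\infty\colon\Delta_\infty^\times\to q^{\Z}$ is onto, while injectivity rests on the observation that, because $\Delta_\infty$ has invariant $1/d$, a uniformizer $\Pi_\infty$ satisfies $v_\infty(\Nr(\Pi_\infty)) = 1$, forcing $\Delta_\infty^1 = \cO_{\Delta_\infty}^\times$.

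The proof of Proposition \ref{prop4.1} applied verbatim to $\Delta$, whose ramification locus is $S = R\cup\{\infty,o\}$, yields $\Vol(\Delta^\times\bs\Delta^1(\A)) = \frac{1}{q-1}\prod_{i=1}^{d-1}\zeta_F^S(-i)$. With the canonical product measure, $\Vol(\cO_{\Delta_\infty}^\times) = \Vol(\cO_{\Delta_o}^\times) = 1$, and since $\bKI^{\infty,o}$ is the preimage under reduction modulo $\fp$ of the scalar matrices $Z(\cD_I^\times)\subset\GL_d(\F_\fp)$, its index in $(\cD^{\infty,o})^\times$ equals $\#\PGL_d(\F_\fp)$, giving $\Vol(\bar K) = 1/\#\PGL_d(\F_\fp)$.

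The main obstacle, which is where most of the remaining work lies, is the stabilizer analysis needed to pass from the volume ratio to the orbit count. The central scalars $\F_q^\times\subset\Delta^\times$ lie in every stabilizer $\Delta^\times\cap x\bar K x^{-1}$, and the assertion needed is that for all but finitely many admissible $I$ each such stabilizer equals $\F_q^\times$ \emph{exactly}. Any stabilizer element is a global unit of a maximal order of $\Delta$, and $\cO_\Delta^\times$ is finite because $\Delta^\times$ sits discretely in the compact group $\prod_v\cO_{\Delta_v}^\times$ (anisotropy of $\Delta$). For each non-scalar $u\in\cO_\Delta^\times$ and each candidate scalar reduction $\lambda$, the element $u-\lambda\in\Delta^\times$ is nonzero and can be divisible by $\fp$ for only finitely many $\fp$; enlarging the excluded finite set accordingly forces triviality of the stabilizer modulo $\F_q^\times$. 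The standard orbit--volume identity then delivers
$$
\#\mvq(\F_o^{(d)})\;=\;(q-1)\,\frac{\Vol(\Delta^\times\bs\Delta^1(\A))}{\Vol(\bar K)}\;=\;\#\PGL_d(\F_\fp)\prod_{i=1}^{d-1}\zeta_F^S(-i).
$$
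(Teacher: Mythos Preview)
Your architecture matches the paper's: reduce via Propositions \ref{prop2.3} and \ref{prop2.4} to counting the supersingular locus, express it as an adelic double coset for the centralizer algebra $\Delta$ (the paper's $\bD$), absorb the factor $\Z$ using $v_o\circ\Nr$ on $\bD_o^\times$, pass to $\bG^1(\A)$ using that $\bD_\infty$ is a division algebra over the degree-one place $\infty$, and invoke Proposition \ref{prop4.1} with ramification locus $S$.

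The one substantive difference is where the quotient by the center is taken. The paper first counts $\mv(k)_{(F,\Pi)}$ with the full level group $\sKI$, for which the stabilizer $\bG(F)\cap g\,\sKI\,g^{-1}$ is $\{1\}$ for \emph{every} nonempty $I$: any element of it has reduced characteristic polynomial in $\bigcap_v\cO_v[X]=\F_q[X]$ and congruent to $(X-1)^d$ modulo $\fp$, hence equal to $1$. Only afterwards does the paper divide by the free transitive action of $\F_\fp^\times/\F_q^\times$ on the geometric components (Corollary \ref{cor-SF}) to pass to $\mvq$. This order of operations makes the stabilizer step a one-liner and yields an exact count of $\mv(k)_{(F,\Pi)}$ valid for all $I$.

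Working directly with $\bKI$, you must show that each stabilizer $\Delta^\times\cap x\bar K x^{-1}$ equals $\F_q^\times$, and your argument for this has a gap. The expression ``$u-\lambda\in\Delta^\times$'' is ill-formed: the scalar $\lambda$ lives in $\F_\fp^\times$, not in $F$, so $u-\lambda$ is not an element of $\Delta$, and ``divisible by $\fp$'' has no clear meaning for it. You also have not explained why a single maximal order $\cO_\Delta$ suffices when the stabilizers range over all conjugates $x\cO_\Delta x^{-1}$. The fix is to argue via the reduced characteristic polynomial, which is conjugation-invariant: any $\delta$ in such a stabilizer is conjugate into the local maximal order at every place of $C$, so its reduced characteristic polynomial lies in $\bigcap_v\cO_v[X]=\F_q[X]$ and $\delta$ is a root of unity in $\overline{\F_q}\cap\Delta$. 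If $\delta\notin\F_q$, its minimal polynomial over $\F_q$ is separable of degree $\geq 2$, and since the inclusion $\F_q[X]\hookrightarrow\F_\fp[X]$ is injective this polynomial keeps its distinct roots over $\F_\fp$ and cannot become a power of a linear factor. Hence the stabilizer is exactly $\F_q^\times$ for \emph{every} nonempty $I$, and the extra finite exclusion you introduce at this step is unnecessary---the ``all but finitely many'' in the theorem comes solely from Proposition \ref{prop2.3}.
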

\begin{proof} Let $\bD$ be the $d^2$-dimensional
central division algebra over $F$ whose invariants are
$\inv_o(\bD)=-1/d$, $\inv_\infty(\bD)=1/d$, and
$\inv_x(\bD)=\inv_x(D)$ if $x\neq o, \infty$. In particular, $\bD_x$
is a division algebra for every $x\in S$; cf. \cite[$\S$32]{Reiner}.
Let $\bcD$ be a locally free sheaf of $\cO_C$-algebras with generic
fibre $\bD$, and such that $\bcD_x$ is a maximal order in $\bD_x$
for any $x\in |C|$. Let $\bG$ be the group of units in $\bD$. Let
$\sKI:=\ker(\bcD^\times \to \bcD_I^\times)$, and let
$\sKI^{\infty,o}$ and $\sKI^\infty$ be defined similarly to
$\S$\ref{ss2.2}.

By Theorem \ref{thm-combin},
\begin{equation}\label{eq4.1}
\#\mv(k)_{(F,\Pi)}=\#(\bG(F)\bs \bG(\A^{\infty, o})/\sKI^{\infty,
o}\times \Z).
\end{equation}
(Note that $G(\A^{\infty,o})\cong \bG(\A^{\infty,o})$ and
$\KI^{\infty,o}\cong \sKI^{\infty,o}$). Since $\bD_o$ is a division
algebra, $v_o\circ \Nr:G(F_o)/\bcD_o^\times\cong \Z$; see
\cite[$\S$12]{Reiner}. But $\bG(F)$ acts on $\Z$ in (\ref{eq4.1})
via $v_o\circ \Nr$, so
$$
\bG(F)\bs \bG(\A^{\infty, o})/\sKI^{\infty, o}\times \Z\cong
\bG(F)\bs \bG(\A^{\infty})/\sKI^{\infty}.
$$
Since $\bD_\infty$ ia a division algebra and $\infty$ is rational,
for each $a\in \bG(\A^\infty)$, up to an element of
$\bcD_\infty^\times$, there is a unique $b\in \bG(F_\infty)$ such
that $(a,b)\in \bG^1(\A)$, cf. \cite[$\S$13]{Reiner}. Hence
$$
\bG(F)\bs \bG(\A^{\infty})/\sKI^{\infty}\cong \bG(F)\bs
\bG^1(\A)/\sKI.
$$

If $I\neq \emptyset$ then $\sKI\cap \bG(F)=1$, since only the
constants are everywhere integral but the only constant which
reduces to the identity modulo $I$ is $1$.  Also note that
$\bcD^\times/\sKI\cong \GL_d(\cO_I)$, so with respect to the
canonical product measure on $\bG(\A)$ we have
$$
\#(\bG(F)\bs \bG^1(\A)/\sKI)=\#\GL_d(\cO_I)\cdot \Vol\left(\bG(F)\bs
\bG^1(\A), d\bar{g}\right).
$$
Finally, using Proposition \ref{prop4.1},
$$
\#\mv(k)_{(F,\Pi)}=\frac{\#
\GL_d(\cO_I)}{(q-1)}\prod_{i=1}^{d-1}\zeta_{F}^S(-i).
$$
Since $\F_\fp^\times/\F_q^\times$ acts freely and transitively on
the components of $\mv$ and preserves $\mv(k)_{(F,\Pi)}$,
$$
\#\mvq(k)_{(F,\Pi)}=\#\PGL_d(\F_\fp)\cdot\prod_{i=1}^{d-1}\zeta_{F}^S(-i).
$$
Now the theorem follows from Propositions \ref{prop2.3} and
\ref{prop2.4}.
\end{proof}


\section{Asymptotic Betti numbers}\label{sec4}

Fix a prime $\ell$ not equal to the characteristic $p$ of $F$ and
consider the $\ell$-adic cohomology groups
$$
H^i_{I,o}:=H^i(\mv\otimes_{\F_o}k, \overline{\Q}_\ell), \quad i\geq
0.
$$
Each $H^i_{I,o}$ is a finite dimensional $\overline{\Q}_\ell$-vector
space, which is $0$ for $i\geq 2(d-1)$. Denote $
h^i_{I,o}=\dim_{\overline{\Q}_\ell}H^i_{I,o}$ and
$h_{I,o}=\sum_{i=0}^{2d-2}h^i_{I,o}$. In this section we estimate
how $h_{I,o}$ grows as $\deg(I)$ tends to infinity.

Let $\eta=\Spec(F)$ be the generic point of $C$. We also have the
$\ell$-adic cohomology groups of the generic fibre $M^\cD_{I,\eta}$
of $M^\cD_I$:
$$
H^i_{I,\eta}:=H^i(M^\cD_{I,\eta}\otimes_{F}\bar{F},
\overline{\Q}_\ell), \quad i\geq 0.
$$

By the proper base change theorem, for all $i\geq 0$ there is a
canonical isomorphism of $\overline{\Q}_\ell$-vector spaces
$H^i_{I,\eta}\cong H^i_{I,o}$. Hence we concentrate on estimating
$h_I=\sum_{i=0}^{2d-2} h^i_{I,\eta}$, where
$h^i_{I,\eta}=\dim_{\overline{\Q}_\ell}H^i_{I,\eta}$.

For two $\Q$-valued functions $f(I)$ and $g(I)$ depending on $I$, we
write $f(I)\sim g(I)$ if $f(I)/g(I)\to 1$ as $\deg(I)\to \infty$.

\subsection{Cohomology and automorphic representations}
Let $\cA$ be the space of locally constant
$\overline{\Q}_\ell$-valued functions on the double coset space
$G(F)\bs G(\A)/\varpi_\infty^\Z$. This space is equipped with the
right regular representation of $G(\A)$. Since $D$ is a division
algebra, the coset space $G(F)\bs G(\A)/\varpi_\infty^\Z$ is
compact, so $\cA$ decomposes as a direct sum of irreducible
admissible representations $\Pi$ of $G(\A)$ with finite
multiplicities $m(\Pi)\geq 0$, cf. \cite[p. 291]{LRS}:
\begin{equation}\label{eq-dec}
\cA=\bigoplus_{\Pi} m(\Pi)\cdot\Pi.
\end{equation}
We will refer to the representations appearing in this sum with
non-zero multiplicities as the \textit{automorphic representations}
of $G(\A)$. Each automorphic representation $\Pi$ decomposes as a
restricted tensor product $\bigotimes_{x\in |C|} \Pi_x$ of
irreducible admissible representations of $G(F_x)$. Denote
$\Pi^\infty:=\bigotimes_{x\neq \infty}\Pi_x$, so
$\Pi=\Pi^\infty\otimes \Pi_\infty$.

Among the automorphic representations of $G(\A)$ we have the
\textit{characters} $\chi\circ \Nr$, where $\Nr:G(\A)\to \A^\times$
is the reduced norm and $\chi$ is a Hecke character on $\A^\times$
with $\chi_\infty=1$. These representations are clearly
$1$-dimensional and it is known that they appear with multiplicity
$1$ in the decomposition (\ref{eq-dec}). Any other automorphic
representation is infinite dimensional.

The \textit{Steinberg representation} $\St_\infty$ of
$G(F_\infty)\cong\GL_d(F_\infty)$ is the unique irreducible quotient
of the induced representation
$$
\mathrm{Ind}^{\GL_d(F_\infty)}_{B(F_\infty)}\left(|\cdot|_\infty^{-\frac{d-1}{2}},|\cdot|_\infty^{-\frac{d-3}{2}},
\dots,|\cdot|_\infty^{\frac{d-3}{2}},|\cdot|_\infty^{\frac{d-1}{2}}\right),
$$
where $B\subset \GL_d$ is the standard Borel subgroup of the upper
triangular matrices.

Let $\T_I$ be the Hecke algebra of $\Q$-valued locally constant
functions with compact supports on $G(\A^\infty)$ invariant under
the left and right translation by $\KI^\infty$. (The product on
$\T_I$ is given by the convolution with respect to the Haar measure
on $G(\A^\infty)$ which gives the volume $1$ to the open compact
subgroup $\KI^\infty\subset G(\A^\infty)$.) There is a natural
action of $\T_I$ on each $H^i_{I,\eta}$ which commutes with the
action of $\Gal(\bar{F}/F)$, cf. \cite[$\S$12]{LRS}. Denote by
$(H^i_{I,\eta})^{\mathrm{ss}}$ the associated semi-simplification of
$H^i_{I,\eta}$ as a $\Gal(\bar{F}/F)\times \T_I$-module. One of the
main results in \cite{LRS} relates $(H^i_{I,\eta})^{\mathrm{ss}}$ to
automorphic representations. Taking the $\KI^{\infty}$-invariants in
(14.9), (14.10) and (14.12) of \textit{loc.cit.}, one obtains:

\begin{thm}\label{thm6.1}
$$
(H^i_{I,\eta})^{\mathrm{ss}}=\bigoplus_\Pi V_\Pi^i\otimes
(\Pi^\infty)^{\KI^\infty},
$$ where the sum
is over automorphic representations of $G(\A)$ such that either
$\Pi_\infty=\mathbf{1}$ is the trivial character or
$\Pi_\infty=\St_\infty$, and $V_\Pi^i$ is a finite dimensional
$\overline{\Q}_\ell$ representation of $\Gal(\bar{F}/F)$.

If $\Pi_\infty=\mathbf{1}$ then $\Pi=\chi\circ \Nr$ for a Hecke
character $\chi$ on $\A^\times$. In this case, $V_\Pi^0$ is the
Galois character corresponding to $\chi$ by Class Field Theory,
$V_\Pi^{2i}$ is isomorphic to the Tate twist $V^0_\Pi(-i)$ and
$V_\Pi^{2i+1}=0$, $0\leq i \leq d-1$.

If $\Pi_\infty=\St_\infty$ then $V^i_\Pi=0$ for $i\neq d-1$ and
$\dim_{\overline{\Q}_\ell}V_\Pi^{d-1}=m(\Pi)\cdot d$.
\end{thm}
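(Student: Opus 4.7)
The plan is to derive Theorem~\ref{thm6.1} directly from the main cohomology results of \cite{LRS}. The starting point is the Matsushima--Langlands-type decomposition of the semi-simplified $\ell$-adic cohomology of $M^\cD_I \otimes_F \bar F$ as a $\Gal(\bar F/F) \times G(\A^\infty)$-module, which is essentially the content of (14.9), (14.10), and (14.12) in \emph{loc.cit.} Since $M^\cD_{I,\eta}$ is smooth and projective, its cohomology is finite-dimensional, so only automorphic representations $\Pi$ with a non-zero $\KI^\infty$-fixed vector contribute after passing to invariants of the finite part; this produces the sum $\bigoplus_\Pi V^i_\Pi \otimes (\Pi^\infty)^{\KI^\infty}$.

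The second step is to identify which archimedean components $\Pi_\infty$ can contribute. Via the non-archimedean (Drinfeld) uniformization of $M^\cD_I$ at $\infty$ and Schneider--Stuhler's description of the cohomology of such varieties in terms of coefficient systems on the Bruhat--Tits building of $\GL_d(F_\infty)$, only \emph{cohomological} irreducible admissible representations of $G(F_\infty) \cong \GL_d(F_\infty)$ can contribute to $H^\ast$. For $\GL_d$ over a non-archimedean local field, these are precisely the smooth characters (contributing in every even degree, by a Lefschetz-type decomposition) and the Steinberg representation $\St_\infty$ (contributing only in the middle degree $d-1$, since $\St_\infty$ realizes the top cohomology of the building).

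For $\Pi_\infty = \mathbf{1}$, every continuous character of $G(\A)$ factors through the reduced norm, forcing $\Pi = \chi \circ \Nr$ for a Hecke character $\chi$; Class Field Theory then supplies the associated one-dimensional Galois character $V^0_\Pi$, and the isomorphism $V^{2i}_\Pi \cong V^0_\Pi(-i)$ together with the vanishing $V^{2i+1}_\Pi = 0$ are instances of the hard Lefschetz decomposition applied to these classes. For $\Pi_\infty = \St_\infty$, the Galois representation $V^{d-1}_\Pi$ is the $d$-dimensional representation attached to the Jacquet--Langlands transfer of $\Pi$ by the global Langlands correspondence for function fields, established in this very context in \cite{LRS}; accounting for the multiplicity $m(\Pi)$ and the dimension $d$ of the Galois representation yields $\dim V^{d-1}_\Pi = m(\Pi) \cdot d$.

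The main obstacle is essentially bibliographic: one must reconcile the notation and normalizations of \cite{LRS} (in particular the action of the center, the choice of Tate twist, and the conventions around the reduced norm) with those used here, but no mathematical input beyond \cite{LRS} is required.
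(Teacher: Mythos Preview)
Your proposal is correct and takes essentially the same approach as the paper: the paper does not give an independent proof of Theorem~\ref{thm6.1} but simply records it as the result of taking $\KI^\infty$-invariants in (14.9), (14.10), and (14.12) of \cite{LRS}, which is exactly your starting point. Your additional heuristic explanations (uniformization plus Schneider--Stuhler for the constraint on $\Pi_\infty$, hard Lefschetz for the Tate twists) are reasonable but go beyond what the paper itself supplies here---in fact the paper invokes the Schneider--Stuhler argument only later in \S\ref{ss6.2} as an \emph{alternative} route to the dimension count, precisely to avoid relying on multiplicity one.
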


\begin{cor}\label{cor-new}
$h^0_{I,\eta}=\#\left[F^\times\bs
(\A^\infty)^\times/K^\infty_{\cO_C,I}\right]$.
\end{cor}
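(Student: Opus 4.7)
The plan is to read $h^0_{I,\eta}$ off the decomposition in Theorem \ref{thm6.1} with $i=0$ and then identify the resulting count with the Pontryagin dual of a finite idele class group. Since $d\geq 2$, the contribution of representations with $\Pi_\infty=\St_\infty$ vanishes (their cohomological degree is $d-1\neq 0$), so only the one-dimensional characters $\Pi=\chi\circ\Nr$, with $\chi$ a Hecke character on $\A^\times$ satisfying $\chi_\infty=1$, survive in $H^0_{I,\eta}$. For each such $\chi$, Theorem \ref{thm6.1} gives $\dim V^0_\Pi=1$, while $(\Pi^\infty)^{K^\infty_{\cD,I}}$ is one-dimensional if $\chi\circ\Nr$ is trivial on $K^\infty_{\cD,I}$ and zero otherwise. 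Hence $h^0_{I,\eta}$ equals the number of Hecke characters $\chi:\A^\times/F^\times\to\overline{\Q}_\ell^\times$ with $\chi_\infty=1$ that are trivial on $\Nr(K^\infty_{\cD,I})$.

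Using the decomposition $\A^\times=(\A^\infty)^\times\times F_\infty^\times$ together with the fact that the diagonal copies of $F^\times$ and $F_\infty^\times$ intersect trivially inside $\A^\times$, such characters correspond bijectively to continuous $\overline{\Q}_\ell^\times$-valued characters of the abelian group
\[
H_I := (\A^\infty)^\times / F^\times \cdot \Nr(K^\infty_{\cD,I}).
\]
The next step is to verify that $\Nr(K^\infty_{\cD,I})=K^\infty_{\cO_C,I}$, after which $H_I$ becomes $F^\times\bs(\A^\infty)^\times/K^\infty_{\cO_C,I}$. This group is finite (for $C=\p^1$ it is isomorphic to $\F_\fp^\times/\F_q^\times$), and Pontryagin duality for a finite abelian group gives $|\widehat{H}_I|=|H_I|$, which yields exactly the claimed equality.

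The technical heart is therefore the local surjectivity identity $\Nr(K^\infty_{\cD,I})=K^\infty_{\cO_C,I}$. At places $x\notin R$ the reduced norm becomes $\det:\GL_d(\cO_x)\twoheadrightarrow\cO_x^\times$, which is visibly surjective on both the full group and on principal congruence kernels (using diagonal matrices for the latter at $x\in\mathrm{Supp}(I)$, which is disjoint from $R$ by hypothesis). At places $x\in R$, where $D_x$ is a division algebra by our blanket assumption, Reiner, Thm.\ 24.13 — already invoked in the proof of Proposition \ref{prop4.1} — yields $\Nr(\fP)=\varpi_x\cO_x$ and hence $\Nr(\cD_x^\times)=\cO_x^\times$. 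Assembling these local statements and noting that the level condition is imposed only at $\mathrm{Supp}(I)\subset C-R-\{\infty\}$ produces the global surjectivity.

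The main obstacle, as indicated, is this last surjectivity check: once it is in hand, the corollary follows as a formal consequence of Theorem \ref{thm6.1} and Pontryagin duality on a finite group. Everything else amounts to disentangling which automorphic data give a non-zero $H^0$-contribution and keeping track of the splitting $\A^\times=(\A^\infty)^\times\times F_\infty^\times$.
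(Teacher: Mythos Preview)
Your proof is correct. Both you and the paper start from Theorem~\ref{thm6.1} to see that $h^0_{I,\eta}$ counts the one-dimensional automorphic characters $\chi\circ\Nr$ (with $\chi_\infty=1$) having a $\KI^\infty$-fixed vector, and both finish by Pontryagin duality on the finite abelian group $F^\times\bs(\A^\infty)^\times/K^\infty_{\cO_C,I}$. The difference lies in how that group is reached: the paper invokes Prasad's strong approximation theorem \cite{Prasad} to obtain the bijection $G(F)\bs G(\A^\infty)/\KI^\infty\cong F^\times\bs(\A^\infty)^\times/K^\infty_{\cO_C,I}$ in one stroke, whereas you stay on the $\GL_1$-side throughout and verify $\Nr(\KI^\infty)=K^\infty_{\cO_C,I}$ place by place. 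Your route is slightly more elementary in that it sidesteps strong approximation entirely; the paper's is terser but leans on a deeper input. One minor quibble: at $x\in R$, the step ``$\Nr(\fP)=\varpi_x\cO_x$, hence $\Nr(\cD_x^\times)=\cO_x^\times$'' is not a formal implication---what you actually need is the surjectivity of $N_{L/F_x}\colon\cO_L^\times\to\cO_x^\times$ for the unramified degree-$d$ subfield $L\subset D_x$---but the conclusion is standard and correct.
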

\begin{proof} By the strong approximation theorem \cite{Prasad}, the reduced norm
induces an isomorphism
$$
G(F)\bs G(\A^\infty)/\KI^\infty \cong F^\times\bs
(\A^\infty)^\times/K_{\cO_C,I}^\infty.
$$
Combined with Theorem \ref{thm6.1}, this easily implies the claim.
\end{proof}

Let
$$
W_\St(G,I):=\sum_{\Pi}
\dim_{\overline{\Q}_\ell}(\Pi^\infty)^{\KI^\infty},
$$
where the sum is over the automorphic representations of $G(\A)$
with $\Pi_\infty=\St_\infty$. Define $W_\mathbf{1}(G,I)$ similarly.
Since $W_\mathbf{1}(G,I)$ is the number of Hecke characters of
$(\A^\infty)^\times$ of conductor dividing $I$, it is more or less
clear (and will be confirmed by our later calculations) that
$W_\mathbf{1}(G,I) + W_\St(G,I)\sim W_\St(G,I)$.

It is conjectured that the multiplicities $m(\Pi)$ of automorphic
representations are always equal to $1$. If we assume this, then
from the preceding discussion we get the following asymptotic
estimates:
\begin{equation}\label{hToW}
h_I\sim h_{I,\eta}^{d-1} \sim d\cdot W_\St(G,I).
\end{equation}
As is explained in \cite{LRS}, cf. pp. 219 and 310 in \textit{loc.
cit.}, that $m(\Pi)=1$ would follow from the global
Jacquet-Langlands correspondence between $G(F)$ and $\GL_d(F)$. This
correspondence is proven in the literature for global fields of
characteristic zero, cf. \cite[Ch.VI]{HT}. It is very likely that
the theorem is valid in exactly the same formulation in positive
characteristic, but the complete proof is still lacking except for
$d=2$ or $3$. To avoid relying on this yet unproven analogue, we
will deduce (\ref{hToW}) from some results about the cohomology of
quotients of $p$-adic symmetric spaces.

\subsection{Cohomology of varieties having rigid-analytic
uniformization}\label{ss6.2} Let $\Omega^d$ be Drinfeld's
$(d-1)$-dimensional symmetric space over $\Fi$. It is obtained from
the projective $(d-1)$-dimensional space over $\Fi$ by removing all
rational hyperplanes. In \cite{Drinfeld}, Drinfeld showed that this
space has a natural rigid-analytic structure. Let $\G$ be a
discrete, cocompact, torsion-free subgroup of $\PGL_d(\Fi)$. $\G$
naturally acts on $\Omega^d$ and the quotient $X_\G:=\G\bs \Omega^d$
is a proper smooth rigid-analytic variety over $\Fi$, which in fact
is the analytification of a projective algebraic variety $\cX_\G$
over $\Fi$; see \cite{Mustafin}.

Let $\C_\infty$ be the completion of an algebraic closure of $\Fi$.
Assuming there exists an \'etale cohomology theory on the category
of smooth rigid-analytic spaces satisfying four natural properties
\cite[pp. 55-56]{SS}, Schneider and Stuhler computed the groups
$$
H^i(X_\G):=H^i_\et(X_\G\widehat{\otimes}_{\Fi} \C_\infty,
\overline{\Q}_\ell), \quad i\geq 0.
$$
Their result implies the following (see Theorem 4 on page 93 of
\cite{SS}):

\begin{thm}\label{thm-SS} Let $\mu(\G)$ be the multiplicity of the
Steinberg representation $\St_\infty$ in $L^2(\G\bs \PGL_d(\Fi))$.
For $i\neq d-1$
$$
\dim_{\overline{\Q}_\ell}H^i(X_\G)=\left\{
  \begin{array}{ll}
    0, & \hbox{if $i$ is odd or $i>2(d-1)$;} \\
    1, & \hbox{if $i$ is even;}
  \end{array}
\right.
$$
and
$$
\dim_{\overline{\Q}_\ell}H^{d-1}(X_\G)=\left\{
  \begin{array}{ll}
    d\cdot \mu(\G)+1, & \hbox{if $d$ is odd;} \\
    d\cdot
\mu(\G), & \hbox{if $d$ is even.}
  \end{array}
\right.
$$
\end{thm}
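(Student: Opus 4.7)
\smallskip

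The plan is to carry out the computation in two conceptually separate steps: first determine the $\ell$-adic cohomology of the ``universal cover'' $\Omega^d$ as a smooth representation of $\PGL_d(\Fi)$, then descend to $X_\G$ using the Galois cover $\Omega^d \to X_\G$ with group $\G$. Throughout I would work with Berkovich's étale cohomology, which by now supplies the axiomatic framework hypothesized in \cite{SS}.

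For the first step, the key tool is the $\PGL_d(\Fi)$-equivariant reduction map $r: \Omega^d \to |\mathcal{B}|$ to the geometric realization of the Bruhat--Tits building of $\PGL_d(\Fi)$. To each simplex $\sigma$ one attaches a standard affinoid (or admissible open) subdomain $U_\sigma$ of $\Omega^d$, and these form a $\PGL_d(\Fi)$-stable admissible cover. Since each $U_\sigma$ is, up to a computable motive, an affinoid with trivial higher cohomology, the associated \v{C}ech-type spectral sequence collapses to a calculation of the cellular cohomology of $|\mathcal{B}|$ with coefficients in certain smooth representations induced from simplex stabilizers. The Solomon--Tits theorem (giving the homotopy type of the spherical building) then forces these cohomologies to vanish outside degrees $0$ and $d-1$, producing $H^0(\Omega^d)=\overline{\Q}_\ell$, $H^{d-1}(\Omega^d)$ isomorphic (as a smooth $\PGL_d(\Fi)$-representation, up to a Tate twist that I would track carefully) to the Steinberg representation $\St_\infty$, and $H^i(\Omega^d)=0$ for all other $i$.

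For the second step, because $\G$ is discrete, cocompact and torsion-free, it acts freely on both $\Omega^d$ and $|\mathcal{B}|$, so $\G \backslash |\mathcal{B}|$ is a compact $(d-1)$-dimensional $K(\G,1)$ and $\Omega^d\to X_\G$ is an étale pro-Galois cover. The Hochschild--Serre spectral sequence
\[
E_2^{p,q} = H^p(\G, H^q(\Omega^d)) \Longrightarrow H^{p+q}(X_\G)
\]
has only two non-zero rows, $q=0$ and $q=d-1$. The bottom row $H^p(\G,\overline{\Q}_\ell)$ equals the cellular cohomology of $\G \backslash |\mathcal{B}|$, and using the natural $\PGL_d(\cO_\infty)$-equivariant CW structure on $|\mathcal{B}|$ one checks (by the same Solomon--Tits input, now after quotienting) that it is $\overline{\Q}_\ell$ in each even degree $0\leq 2j\leq 2(d-1)$ and zero in odd degrees. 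The top row $H^p(\G,\St_\infty)$ is where $\mu(\G)$ enters: by a theorem of Borel--Serre / Casselman (in the $p$-adic version due to Borel--Wallach for real groups and Casselman for $p$-adic), for $\St_\infty$ and $\G$ cocompact torsion-free one has $H^p(\G,\St_\infty) \cong \mu(\G)\cdot H^p_{\mathrm{cts}}(\PGL_d(\Fi),\St_\infty)$, and the latter continuous cohomology is $\overline{\Q}_\ell$ in degree $0$ and vanishes elsewhere; so this row contributes only a copy of $\mu(\G)\cdot \overline{\Q}_\ell$ at $p=0$, $q=d-1$.

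Assembling the two rows yields the two contributions in the theorem: a copy of $\overline{\Q}_\ell$ in every even degree $0\leq 2j \leq 2(d-1)$ from the trivial coefficients, plus $d\cdot \mu(\G)$ new classes in degree $d-1$ coming from the Steinberg row (the factor $d$ arising from the Tate twist / normalization in the identification of $H^{d-1}(\Omega^d)$ with $\St_\infty$, which must be tracked against the fact that the Steinberg appears with its own internal multiplicity in $H^{d-1}(\Omega^d)$). In the case $d$ odd, $d-1$ is even so the trivial-row contribution $\overline{\Q}_\ell$ adds to give $d\mu(\G)+1$; in the case $d$ even it does not, yielding $d\mu(\G)$. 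The spectral sequence degenerates trivially because of the row-vanishing. The main obstacle I expect is the precise identification of $H^{d-1}(\Omega^d)$ with $\St_\infty$ including the numerical coefficient $d$: the combinatorics of the Solomon--Tits argument, the correct Tate twist, and the interplay with the admissible cover all have to be pinned down carefully, and it is this step (rather than the spectral sequence or the Casselman-type vanishing) that produces the asymmetry between the $d$ even and $d$ odd cases.
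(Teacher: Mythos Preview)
Your overall architecture---compute $H^\ast(\Omega^d)$ as a $\PGL_d(\Fi)$-module, then run Hochschild--Serre for the cover $\Omega^d\to X_\G$---is indeed the route taken in \cite{SS}; note that the present paper gives no proof of its own but simply quotes Theorem~4 on p.~93 of \cite{SS}. However, both of your steps are carried out incorrectly, and the errors are not cosmetic.

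\emph{Step 1 is wrong.} The \'etale cohomology $H^i(\Omega^d)$ is nonzero for \emph{every} $0\le i\le d-1$, not just for $i=0$ and $i=d-1$. Schneider and Stuhler show that each $H^i(\Omega^d)$ is (up to twist) the dual of a distinct generalized Steinberg representation; only the extreme cases $i=0$ and $i=d-1$ give the trivial representation and the full Steinberg. Solomon--Tits describes the homotopy type of the \emph{spherical} building, not the contractible affine building $|\mathcal B|$ used here, and does not produce the vanishing you assert.

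\emph{Step 2 is wrong.} Your description of the bottom row is impossible: $\G\backslash|\mathcal B|$ is a finite CW-complex of dimension $d-1$, so $H^p(\G,\overline{\Q}_\ell)=0$ for all $p>d-1$; it cannot be $1$-dimensional in every even degree up to $2(d-1)$. In fact the Garland--Casselman theorem, quoted a few lines below in this very paper, says that $H^p(\G,\overline{\Q}_\ell)$ is $1$-dimensional for $p=0$, is $\mu(\G)$-dimensional for $p=d-1$, and vanishes otherwise. Your top row is also off: $H^0_{\mathrm{cts}}(\PGL_d(\Fi),\St_\infty)=\St_\infty^{\PGL_d(\Fi)}=0$, so the formula you invoke cannot yield what you claim.

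Finally, the factor $d$ in $d\cdot\mu(\G)$ is neither a Tate-twist artefact nor an ``internal multiplicity'' of the irreducible representation $\St_\infty$. It arises because the spectral sequence has $d$ nonzero rows $q=0,\dots,d-1$, and each row contributes a $\mu(\G)$-dimensional piece to total degree $d-1$, while the $1$-dimensional contribution in each even total degree $2q$ also comes one from each row. Extracting these contributions requires computing $H^p(\G,V)$ for each of the intermediate generalized Steinberg representations $V$, which \cite{SS} does via the exact sequences linking them together with the Garland--Casselman vanishing.
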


In \cite{Berkovich}, Berkovich developed an \'etale cohomology
theory for non-archimedean analytic spaces which satisfies the four
properties required in \cite{SS}. Moreover, he proved a comparison
theorem between analytic and algebraic \'etale cohomology groups
$H^i(X_\G)\cong H^i(\cX_\G\otimes_{\Fi}\bar{F}_\infty,
\overline{\Q}_\ell)$; see \cite[$\S$7.1]{Berkovich}.

\vspace{0.1in}

Now we want to apply Theorem \ref{thm-SS} to $M^\cD_{I,\eta}$. For
this we need to know that the modular varieties $M^\cD_{I,\eta}$
have rigid-analytic uniformization over $\Fi$. As we already
mentioned, the reduced norm induces a bijection
$$
G(F)\bs G(\A^\infty)/\KI^\infty \overset{\sim}{\To}
F^\times\bs(\A^\infty)^\times/\Nr(\KI^\infty)\cong
\cO_I^\times/\F_q^\times.
$$
Choose a system $S$ of representatives for this finite coset space.
For each $s\in S$, let
$$
\G_{I,s}:=G(F)\cap s\KI^\infty s^{-1}.
$$
\begin{lem}\label{lem-gar}
Under the natural embedding $$\G_{I,s}\hookrightarrow
G(F)/F^\times\hookrightarrow G(\Fi)/\Fi^\times\cong \PGL_d(\Fi),$$
$\G_{I,s}$ is a discrete, cocompact, torsion-free subgroup of
$\PGL_d(\Fi)$.
\end{lem}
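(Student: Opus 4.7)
For \emph{discreteness}, I would use that $G(F)$ embeds diagonally as a discrete subgroup of $G(\A)$, so that for any compact $C \subset G(\Fi)$ the intersection $\G_{I,s} \cap C$ equals $G(F) \cap (s\KI^\infty s^{-1} \times C)$, a discrete set intersected with a compact set, hence finite. To pass to $\PGL_d(\Fi)$, I would check that $\G_{I,s} \cap F^\times = \{1\}$: any such element lies in $\F_q^\times \cap \KI^\infty$, and the inclusion $\F_q^\times \hookrightarrow \F_\fp^\times$ forces it to equal $1$. For \emph{cocompactness}, I would invoke the fact recalled in $\S\ref{ss6.2}$ that $G(F)\bs G(\A)/\varpi_\infty^\Z$ is compact (this uses crucially that $D$ is a division algebra). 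Writing the double-coset decomposition $G(F)\bs G(\A)/(\varpi_\infty^\Z \KI^\infty) = \bigsqcup_{s \in S} \G_{I,s} \bs G(\Fi)/\varpi_\infty^\Z$, each summand is compact, and projection through the compact kernel $\Fi^\times/\varpi_\infty^\Z \cong \cO_\infty^\times$ yields compactness of $\G_{I,s} \bs \PGL_d(\Fi)$.

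The hard part is \emph{torsion-freeness}. Given $\gamma \in \G_{I,s}$ whose image in $\PGL_d(\Fi)$ has finite order $N$, I would first derive $\gamma^N = c \in \Fi^\times \cap D = F^\times$ from centrality of the image. Since $\gamma$ lies in the unit group of the maximal order $s_x \cD_x s_x^{-1}$ at every finite place $x$, the central element $c$ also lies in $\cO_x^\times$ for each such $x$, forcing $c \in \F_q^\times$. The congruence $s^{-1}\gamma s \equiv 1 \pmod{I}$ then gives $c \equiv 1 \pmod{\fp}$, and the injectivity of $\F_q^\times \hookrightarrow \F_\fp^\times$ yields $c = 1$, so $\gamma^N = 1$.

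Writing $N = p^a N'$ with $\gcd(N',p)=1$, any irreducible factor of $X^N - 1 = (X^{N'} - 1)^{p^a}$ divides $X^{N'} - 1$, so the minimal polynomial $m_\gamma \in F[X]$ of $\gamma$ is a separable polynomial whose roots are $m$-th roots of unity in $\overline{\F}_q$ for some $m \mid N'$; in particular $m_\gamma \in \F_q[X]$. The congruence $\gamma \equiv 1$ in the local reduction $\M_d(\F_\fp)$ then forces the reduced characteristic polynomial $m_\gamma^{d/e}$, where $e = [F(\gamma):F]$, to reduce to $(X-1)^d$; thus $m_\gamma \equiv (X-1)^e \pmod{\fp}$. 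But $m_\gamma$ is already an irreducible separable polynomial with $\F_q$-coefficients whose roots are nontrivial unless $m = 1$, so this forces $e = 1$, whence $\gamma \in F^\times$ and the earlier congruence argument gives $\gamma = 1$. The main obstacle is this last step: one must exploit that $m_\gamma$ has \emph{constant} coefficients so that reduction modulo $\fp$ does not alter it, producing the contradiction with the shape $(X-1)^e$.
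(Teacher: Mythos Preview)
Your proof is correct, and for discreteness and cocompactness you supply details where the paper simply cites \cite[Prop.~5.3.8]{LaumonCDV}. The torsion-freeness argument, however, takes a different route from the paper's. After reducing to $\gamma^N=1$ in $D^\times$ (the paper leaves the passage from torsion in $\PGL_d$ to torsion in $\Gamma_I$ implicit, relying on $\Gamma_I\cap F^\times=\{1\}$, which you make explicit), the paper observes that $K=F[\gamma]$ is obtained from $F$ by a constant field extension, so the order of $\gamma$ is prime to $p$; it then invokes the standard fact that the principal congruence subgroup $\Gamma(\fp)\subset\GL_d(\cO_\fp)$ has no prime-to-$p$ torsion, forcing $\gamma=1$. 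Your argument avoids this structural fact about congruence subgroups by working directly with the minimal polynomial: since $\gamma$ is a root of unity, $m_\gamma$ lies in $\F_q[X]$, so its reduction modulo $\fp$ is $m_\gamma$ itself; comparing with the shape $(X-1)^e$ forced by the congruence condition and invoking separability gives $m_\gamma=X-1$. The paper's approach is shorter, packaging the endgame into a one-line appeal to the pro-$p$ nature of $\Gamma(\fp)$; yours is more self-contained and makes the role of the constant-coefficient phenomenon completely transparent.
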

\begin{proof} It is enough to prove the claim for $\G_I:=\G_{I,1}$.
The fact that $\G_I$ is a discrete, cocompact subgroup of
$\PGL_d(\Fi)$ is well-known, cf. \cite[Prop. 5.3.8]{LaumonCDV}. We
show that it is torsion-free. Assume $\gamma\in \G_I$ is torsion.
Let $n\in \Z_{>0}$ be its exponent, so $\gamma^n=1$. Since $\gamma$
is a nonzero element of the division algebra $D(F)$, we can consider
the finite degree field extension $K=F[\gamma]$ of $F$. We see that
$K$ is obtained by an extension of constants, so $n$ is coprime to
$p$. On the other hand, $\gamma\in \KI^\infty$. Choose a place $x$
which is in the support of $I$. Since $x\not \in R$ by assumption,
we can consider $\gamma$ as an element of the principal congruence
subgroup $\G(x):=\{M\in \GL_d(\cO_x)\ |\ M\equiv 1\ \mod\
\varpi_x\}$. It is not hard to check that $\G(x)$ has no
prime-to-$p$ torsion. Therefore, $n=1$.
\end{proof}

Next, since
\begin{equation}\label{eq-sat}
G(F)\bs G(\A)/\KI^\infty \Fi^\times \cong \bigsqcup_{s\in S}
\left(\G_{I,s}\bs \PGL_d(\Fi)\right),
\end{equation}
we have
\begin{equation}\label{eq-Stmu}
W_\St(G,I)=\sum_{s\in S} \mu(\G_{I,s}).
\end{equation}

\begin{thm}\label{thm-BS}
$$
(M^\cD_{I,\eta}\otimes_F \Fi)^\an\cong \bigsqcup_{s\in S}
\left(\G_{I,s}\bs \Omega^d \right).
$$
\end{thm}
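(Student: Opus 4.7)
My plan is to deduce this from the rigid-analytic uniformization of $\cD$-elliptic sheaves at $\infty$, combined with the adelic description of connected components. First, since $\infty\notin R$ and $D\otimes_F \Fi\cong \M_d(\Fi)$, the local moduli problem at $\infty$ for $\cD$-elliptic sheaves over $\Fi$-formal schemes is controlled by Drinfeld's formal model of $\Omega^d$, cf. \cite{Drinfeld}. Globalizing this via the adelic description of the isomorphism classes of $\cD$-elliptic sheaves with level-$I$ structure away from $\infty$, one obtains a rigid-analytic uniformization
$$
(M^\cD_{I,\eta}\otimes_F \Fi)^\an \cong G(F)\bs\bigl(\Omega^d\times G(\A^\infty)/\KI^\infty\bigr),
$$
where $G(F)$ acts diagonally: on the first factor through $G(F)\hookrightarrow G(\Fi)\twoheadrightarrow \PGL_d(\Fi)$ (the centre $\Fi^\times$ acts trivially on $\Omega^d$), and on the second factor by left translation.

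Next, I would decompose this double quotient by projecting onto the set of connected components. The reduced norm together with strong approximation for the kernel of $\Nr$ gives the bijection $G(F)\bs G(\A^\infty)/\KI^\infty\cong F^\times\bs(\A^\infty)^\times/\Nr(\KI^\infty)$ already used in Corollary \ref{cor-new}; this is precisely the indexing set $S$ of (\ref{eq-sat}). Fixing the representatives $\{s\}_{s\in S}$, a standard orbit-stabilizer argument yields
$$
G(F)\bs\bigl(\Omega^d\times G(\A^\infty)/\KI^\infty\bigr)\cong \bigsqcup_{s\in S}\bigl(\G_{I,s}\bs\Omega^d\bigr),
$$
since the stabilizer in $G(F)$ of the coset $s\KI^\infty$ is exactly $\G_{I,s}=G(F)\cap s\KI^\infty s^{-1}$. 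By Lemma \ref{lem-gar} each $\G_{I,s}$ acts on $\Omega^d$ as a discrete, cocompact, torsion-free subgroup of $\PGL_d(\Fi)$, so Mustafin's theorem makes each $\G_{I,s}\bs\Omega^d$ the analytification of a projective variety over $\Fi$, and the two algebraizations agree by rigid GAGA applied to the projective scheme $M^\cD_{I,\eta}\otimes_F\Fi$.

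The main obstacle is the first step: one needs more than a bijection on $\Fi$-valued points --- the uniformization must be an isomorphism of rigid-analytic spaces that is functorial in the level-$I$ structure. Concretely, this requires identifying the formal completion of a suitable integral model of $M^\cD_I$ along its $\infty$-fibre with a quotient of Drinfeld's formal upper half space fibred over the finite set of connected components. Such a Cherednik-Drinfeld-type statement for $\cD$-elliptic sheaves is essentially built into the adelic description of the geometric points given in \cite{LRS}, and the content of Theorem \ref{thm-BS} is the translation of that description into the rigid-analytic category.
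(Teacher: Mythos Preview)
Your outline is correct and matches the paper's approach: both reduce the statement to an adelic Cherednik--Drinfeld-type uniformization of $M^\cD_I$ at $\infty$, followed by the standard orbit--stabilizer decomposition over the double cosets $G(F)\bs G(\A^\infty)/\KI^\infty$. The paper's proof is in fact nothing more than a citation: it invokes Theorem~4.4.11 of Blum--Stuhler \cite{BS} (a statement at the level of formal schemes, to which one applies Raynaud's generic fibre functor), and notes that a complete proof can alternatively be extracted from Hausberger's Cherednik--Drinfeld theorem \cite{Hausberger} via \cite{Spiess}.

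The one point where your write-up is weaker than the paper's is the final paragraph. You say the rigid-analytic uniformization is ``essentially built into'' the point-count of \cite{LRS} and that Theorem~\ref{thm-BS} is a ``translation'' of that description into the rigid-analytic category. This undersells the difficulty: \cite{LRS} gives the combinatorial description of geometric points, but upgrading this to an isomorphism of formal or rigid-analytic spaces is genuinely additional work---it is precisely the content of \cite{BS}, \cite{Hausberger}, and \cite{Spiess}, and the paper is careful to cite these rather than claim it follows from \cite{LRS} alone. So your identification of the ``main obstacle'' is exactly right, but the resolution you offer for it is not a proof; you should cite the same external sources the paper does.
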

\begin{proof} This follows by applying Raynaud's ``generic fibre''
functor to Theorem 4.4.11 in \cite{BS}, which is stated in the
language of formal schemes. The proof of this theorem is only
outlined in \cite{BS}. Nevertheless, as is shown in \cite{Spiess},
the desired uniformization can be deduced from Hausberger's version
of the Cherednik-Drinfeld theorem for $M^\cD_{I}$ \cite{Hausberger}.
\end{proof}

Since $F\hookrightarrow \Fi$, by the proper base change theorem we
have an isomorphism $H^i_{I,\eta}\cong
H^i(M^\cD_{I,\eta}\otimes_{F}\bar{F}_\infty, \overline{\Q}_\ell)$
for all $i\geq 0$. Combining this with Theorem \ref{thm-SS},
(\ref{eq-Stmu}), Theorem \ref{thm-BS} and Berkovich's results, we
easily obtain (\ref{hToW}). Note also that by comparing the
dimensions of cohomology groups in Theorems \ref{thm6.1} and
\ref{thm-SS}, we can deduce that the multiplicities $m(\Pi)$ are
indeed $1$.

\subsection{Euler-Poincar\'e measure} It remains to estimate
$W_\St(G,I)$. We start with a well-known fact about discrete
subgroups of $p$-adic groups.

\begin{thm}
Let $\G$ be a discrete, cocompact, torsion-free subgroup of
$\PGL_d(\Fi)$. Then $\dim_{\Q} H^0(\G,\Q)=1$, $\dim_{\Q}
H^{d-1}(\G,\Q)=\mu(\G)$, and $H^i(\G,\Q)=0$ for $i\neq 0,\ d-1$. In
particular,
\begin{equation}\label{Garland}
\chi(\G):=\sum_{i\geq 0} (-1)^i \dim_{\Q}
H^i(\G,\Q)=1+(-1)^{d-1}\mu(\G).
\end{equation}
\end{thm}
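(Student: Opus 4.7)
The natural geometric input is the Bruhat--Tits building $\mathcal{B}$ of $\PGL_d(\Fi)$, a contractible simplicial complex of dimension $d-1$ on which $\PGL_d(\Fi)$ acts with compact open stabilizers. Since $\G$ is discrete, torsion-free, and cocompact, its action on $\mathcal{B}$ is free with finite quotient, so $\mathcal{B}$ is a model for $E\G$ and $\G\bs\mathcal{B}$ is a finite CW model for $K(\G,1)$ of dimension $d-1$. It follows at once that $H^i(\G,\Q)=H^i(\G\bs\mathcal{B},\Q)$ vanishes for $i>d-1$, and that $H^0(\G,\Q)=\Q$ because $\mathcal{B}$, and hence $\G\bs\mathcal{B}$, is connected.

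For the remaining degrees I would extend scalars to $\C$ and invoke a Matsushima-type formula in the non-archimedean setting. Cocompactness of $\G$ gives a discrete decomposition
\begin{equation*}
L^2(\G\bs G)\;=\;\bigoplus_\pi m(\pi)\,V_\pi,\qquad G:=\PGL_d(\Fi),
\end{equation*}
with finite multiplicities, and applying the cellular cochain complex of $\mathcal{B}$ with coefficients in smooth vectors produces an identification
\begin{equation*}
H^i(\G,\C)\;\cong\;\bigoplus_\pi m(\pi)\,H^i_{\mathrm{ct}}(G,V_\pi),
\end{equation*}
where the right-hand side is continuous group cohomology of $G$. Concretely, one builds an equivariant resolution of the trivial $\G$-module by simplicial chains on $\mathcal{B}$ and takes $\G$-invariants.

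The decisive input is the vanishing theorem of Casselman and Garland: among irreducible admissible unitary representations $V_\pi$ of $\PGL_d(\Fi)$, the continuous cohomology $H^i_{\mathrm{ct}}(G,V_\pi)$ is zero unless $V_\pi$ is either the trivial representation, in which case only $H^0_{\mathrm{ct}}$ is one-dimensional, or the Steinberg representation $\St_\infty$, in which case only $H^{d-1}_{\mathrm{ct}}$ is one-dimensional. The trivial representation occurs in $L^2(\G\bs G)$ with multiplicity one, recovering $\dim H^0=1$, while the definition of $\mu(\G)$ yields $\dim H^{d-1}(\G,\C)=\mu(\G)$ and all intermediate groups vanish. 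Descending back to $\Q$ is automatic since the dimensions are integers independent of the coefficient field, and the Euler--Poincar\'e identity \eqref{Garland} follows by alternating summation.

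The principal obstacle is representation-theoretic rather than geometric: one must have the Casselman--Garland vanishing result in its sharp $p$-adic form for $\PGL_d$, which identifies $\St_\infty$ and the trivial representation as the only cohomological constituents. Granted this, the spectral sequence / equivariant cochain setup is standard and the conclusion is essentially bookkeeping.
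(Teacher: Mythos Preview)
Your sketch is correct and is essentially Casselman's representation-theoretic argument, which is precisely what the paper invokes: the paper does not supply its own proof but simply cites Garland \cite{Garland} (valid for $q$ large, via $p$-adic curvature on the building) and Casselman \cite{Casselman} (removing that restriction by the Matsushima-type decomposition you outline). You have correctly isolated both the geometric input (the building as a finite-dimensional $K(\G,1)$) and the decisive representation-theoretic vanishing that singles out $\mathbf{1}$ and $\St_\infty$.
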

\begin{proof}
This was first proven by Garland \cite{Garland} using a discrete
analogue of curvature on Bruhat-Tits buildings, but under the
assumption that $q$ is large enough. Casselman \cite{Casselman}
removed the restriction on $q$ by giving a completely different
proof which relies on representation-theoretic methods.
\end{proof}

From (\ref{Garland}) and (\ref{eq-Stmu}), we get
\begin{equation}\label{eq-3.4}
W_\St(G,I)\sim \sum_{s\in S}(-1)^{d-1}\chi(\G_{I,s}).
\end{equation}

In \cite{SerreCGD}, Serre developed a theory which allows to compute
$\chi(\G_{I,s})$ as a volume. Serre's result is reproduced in a
convenient form in Proposition 5.3.6 of \cite{LaumonCDV}. Combining
this with Proposition 5.3.9 in \cite{LaumonCDV}, in our situation we
get the following statement:

Let $dg$ be the Haar measure on $\GL_d(\Fi)$ normalized by
$\Vol(\GL_d(\cO_\infty), dg)=1$. Let $dz$ be the Haar measure on
$\Fi^\times$ normalized by $\Vol(\cO_\infty^\times, dz)=1$. Fix the
Haar measure $dh=dg/dz$ on $\PGL_d(\Fi)$ and the counting measure
$d\delta$ of $\G_{I,s}$. Then
\begin{equation}\label{eq-3.5}
\chi(\G_{I,s})=\Vol\left(\G_{I,s}\bs \PGL_d(\Fi),
\frac{dh}{d\delta}\right)\cdot
\frac{1}{d}\prod_{i=1}^{d-1}\zeta_\infty(-i)^{-1}.
\end{equation}

The push-forward of the canonical product measure $d\bar{g}$ on
$G(\A)$ to the double coset space $G(F)\bs G(\A)/\KI^\infty
\Fi^\times$ induces via (\ref{eq-sat}) the measure $dh/d\delta$ on
each $\G_{I,s}\bs \PGL_d(\Fi)$. Combining this with (\ref{eq-3.4})
and (\ref{eq-3.5}), we get
\begin{equation}\label{eq-Wasm}
W_\St(G,I)\sim (-1)^{d-1}\Vol\left(G(F)\bs G(\A)/\KI^\infty
\Fi^\times, d\bar{g}\right)\cdot
\frac{1}{d}\prod_{i=1}^{d-1}\zeta_\infty(-i)^{-1}.
\end{equation}

It remains to compute this volume. It is well-known that
$\norm{\cdot}:G(\A)\to q^{\Z}$ in (\ref{eq-norm}) is surjective. The
image of $\Fi^\times$ in $q^{\Z}$ is clearly $q^{d\Z}$. Hence there
is an exact sequence
$$
0\to G(F)\bs G^1(\A)/\cO_\infty^\times\to G(F)\bs G(\A)/\Fi^\times
\to \Z/d\Z\to 0,
$$
which implies
\begin{equation}\label{eqVol} \Vol\left(G(F)\bs
G(\A)/\KI^\infty \Fi^\times, d\bar{g}\right) = d\cdot\#
\GL_d(\cO_I)\cdot \Vol\left(G(F)\bs G^1(\A), d\bar{g}\right).
\end{equation}

Combining (\ref{hToW}), (\ref{eq-Wasm}), (\ref{eqVol}) and
Proposition \ref{prop4.1}, we obtain the following asymptotic
formula
\begin{equation}\label{eq3.6}
h_I\sim (-1)^{(d-1)}d\cdot \frac{\#
\GL_d(\cO_I)}{(q-1)}\prod_{i=1}^{d-1}\zeta^{R\cup\infty}_F(-i).
\end{equation}

\vspace{0.1in}

Now assume $I=\Spec(A/\fp)\neq o$ is admissible. Let $\bar{h}_I$ be
the sum of the Betti numbers of $\mvq$. Since the morphism $\mv\to
\mvq$ is \'etale of degree $[\F_\fp:\F_q]$, (\ref{eq3.6}) implies
\begin{equation}\label{eq-asm1}
\bar{h}_I\sim (-1)^{(d-1)}d\cdot
\#\PGL_d(\F_\fp)\prod_{i=1}^{d-1}\zeta^{R\cup\infty}_F(-i).
\end{equation}
On the other hand, by Theorem \ref{thm2.5} we know that
\begin{equation}\label{eq-asm2}
\# \mvq(\F_o^{(d)})\sim \#\PGL_d(\F_\fp)\cdot
\prod_{i=1}^{d-1}\zeta_{F}^{R\cup \infty\cup o}(-i).
\end{equation}
Combining (\ref{eq-asm1}) and (\ref{eq-asm2}), we get
$$
\# \mvq(\F_o^{(d)})/\bar{h}_I\sim
\frac{1}{d}(-1)^{d-1}\prod_{i=1}^{d-1}\zeta_o(-i)^{-1}=\frac{1}{d}\prod_{i=1}^{d-1}
(q_o^i-1),
$$
which is the second claim of Theorem \ref{thm-main}. The first claim
is Corollary \ref{cor-SF}.



\begin{thebibliography}{99}

\bibitem{Berkovich}
V.~Berkovich, \emph{{\'Etale} cohomology for non-archimedean
analytic spaces},
  Publ. Math. IH\'ES \textbf{78} (1993), 5--161.

\bibitem{BS}
A.~Blum and U.~Stuhler, \emph{Drinfeld modules and elliptic
sheaves}, in Vector
  bundles on curves: New directions, LNM 1649, Springer (1997), 110--188.

\bibitem{Boyer}
P.~Boyer, \emph{Mauvaise r\'eduction des vari\'et\'es de {Drinfeld}
et
  correspondance de {Langlands} locale}, Invent. Math. \textbf{138} (1999),
  573--629.

\bibitem{Casselman}
W.~Casselman, \emph{On a $p$-adic vanishing theorem of {Garland}},
Bull. Amer.
  Math. Soc. \textbf{80} (1974), 1001--1004.

\bibitem{WeilI}
P.~Deligne, \emph{La conjecture de {Weil}: {I}}, Publ. Math. IH\'ES
\textbf{43}
  (1974), 273--307.

\bibitem{Drinfeld}
V.~Drinfeld, \emph{Elliptic modules}, Math. USSR Sbornik \textbf{23}
(1974),
  561--592.

\bibitem{Drinf.ES}
V.~Drinfeld, \emph{Commutative subrings of certain noncommutative
rings}, Funct.
  Anal. Appl. \textbf{11} (1977), 9--12.

\bibitem{Elkies}
N.~Elkies, \emph{Explicit modular towers}, in Proceedings of the
Thirty-Fifth
  Annual Allerton Conference on Communication, Control and Computing (1998),
  23--32.

\bibitem{many-authors}
N.~Elkies, E.~Howe, A.~Kresch, B.~Poonen, J.~Wetherell, and
M.~Zieve,
  \emph{Curves of every genus with many points, {II: Asymptotically} good
  families}, Duke Math. J. \textbf{122} (2004), 399--422.

\bibitem{Garland}
H.~Garland, \emph{$p$-adic curvature and the cohomology of discrete
groups},
  Ann. Math. \textbf{97} (1973), 375--423.

\bibitem{Goppa}
V.~Goppa, \emph{Codes on algebraic curves}, Dokl. Akad. Nauk SSSR
\textbf{259}
  (1981), 1289--1290.

\bibitem{HT}
M.~Harris and R.~Taylor, \emph{The geometry and cohomology of some
simple
  {Shimura} varieties}, Princeton Univ. Press, 2001.

\bibitem{Hausberger}
T.~Hausberger, \emph{Uniformisation des vari\'et\'es de
  {Laumon-Rapoport-Stuhler} et conjecture de {Drinfeld-Carayol}}, Ann. Inst.
  Fourier \textbf{55} (2005), 1285--1371.

\bibitem{Ihara}
Y.~Ihara, \emph{Some remarks on the number of rational points of
algebraic
  curve over finite fields}, J. Fac. Sci. Univ. Tokyo \textbf{28} (1981),
  721--724.

\bibitem{LT}
G.~Lachaud and M.~Tsfasman, \emph{Formules explicites pour le nombre
de points
  des vari\'et\'es sur un corps fini}, J. Reine Angew. Math. \textbf{493}
  (1997), 1--60.

\bibitem{Lafforgue}
L.~Lafforgue, \emph{Chtoucas de {Drinfeld} et conjecture de
  {Ramanujan-Petersson}}, Ast\'erisque \textbf{243} (1997), 1--329.

\bibitem{LaumonCDV}
G.~Laumon, \emph{Cohomology of {Drinfeld} modular varieties: {Part
I}},
  Cambridge Univ. Press, 1996.

\bibitem{LRS}
G.~Laumon, M.~Rapoport, and U.~Stuhler, \emph{$\mathcal{D}$-elliptic
sheaves
  and the {Langlands} correspondence}, Invent. Math. \textbf{113} (1993),
  217--338.

\bibitem{MV}
Yu. Manin and S.~Vladut, \emph{Linear codes and modular curves}, J.
Sov. Math.
  \textbf{30} (1985), 2611--2643.

\bibitem{Mustafin}
G.~Mustafin, \emph{Nonarchimedean uniformization}, Math. USSR
Sbornik
  \textbf{34} (1978), 187--214.

\bibitem{PapIMRN}
M.~Papikian, \emph{The number of rational points on {Drinfeld}
modular
  varieties over finite fields}, Int. Math. Res. Not. \textbf{Article ID 94356}
  (2006), 1--36.

\bibitem{Prasad}
G.~Prasad, \emph{Strong approximation for semi-simple groups over
function
  fields}, Ann. Math. \textbf{105} (1977), 553--572.

\bibitem{Reiner}
I.~Reiner, \emph{Maximal orders}, Academic Press, 1975.

\bibitem{Sauvageot}
F.~Sauvageot, \emph{Asymptotique des vari\'et\'es de {Shimura}}, C.
R. Acad.
  Sci. Paris S\'er. I Math. \textbf{342} (2006), 899--902.

\bibitem{SS}
P.~Schneider and U.~Stuhler, \emph{The cohomology of $p$-adic
symmetric
  spaces}, Invent. Math. \textbf{105} (1991), 47--122.

\bibitem{SerreCGD}
J.-P. Serre, \emph{Cohomologie des groupes discrets}, Ann. Math.
Studies
  \textbf{70} (1970), 77--169.

\bibitem{SerreRP}
J.-P. Serre, \emph{Sur le nombre des points rationnels d'une courbe
alg\'ebrique
  sur un corps fini}, C. R. Acad. Sci. Paris S\'er. I Math. \textbf{296}
  (1983), 397--402.

\bibitem{SerreJAMS}
J.-P. Serre, \emph{R\'epartition asymptotique des valeurs propres de
l'op\'erateur
  de {Hecke $T_p$}}, J. Amer. Math. Soc. \textbf{10} (1997), 75--102.

\bibitem{Spiess}
M.~Spiess, \emph{Twists of {Drinfeld-Stuhler} modular varieties},
  arXiv:math.AG/0701566v1 (2007).

\bibitem{Tsfasman}
M.~Tsfasman, \emph{Nombre de points des surfaces sur un corps fini},
in
  Arithmetic, geometry and coding theory (1996), 209--224.

\bibitem{TVZ}
M.~Tsfasman, S.~Vladut, and T.~Zink, \emph{{Modular curves, Shimura
curves, and
  Goppa codes, better than Varshamov-Gilbert bound}}, Math. Nachr. \textbf{109}
  (1982), 21--28.

\bibitem{DV}
S.~Vladut and V.~Drinfeld, \emph{The number of points of an
algebraic curve},
  Funct. Anal. Appl. \textbf{17} (1983), 53--54.

\bibitem{WeilAdeles}
A.~Weil, \emph{Adeles and algebraic groups}, Progress in Math. 23,
  Birkh{\"a}user, 1982.

\end{thebibliography}

\end{document}